\def\Z{\mathbb Z}
\def\N{\mathbb N}
\def\Q{\mathbb Q}
\def\R{\mathbb R}
\def\cB{\mathcal B}
\def\cJ{\mathcal J}
\def\cP{\mathcal P}
\def\cA{\mathcal A}
\def\cI{\mathcal I}
\def\fd{\mathfrak d}
\def\1{{\bf 1}}
\def\pmod #1{\ ({\rm{mod}}\ #1)}
\theoremstyle{plain}
\newtheorem{theorem}{Theorem}
\newtheorem{proposition}{Proposition}
\newtheorem{lemma}{Lemma}
\theoremstyle{definition}
\theoremstyle{remark}
\begin{document}
\title{The Green-Tao theorem for  Piatetski-Shapiro primes}

\author{Hongze Li}
\address{School of Mathematical Sciences, Shanghai Jiao Tong University, Shanghai 200240, People's Republic of China}
\email{lihz@sjtu.edu.cn}
\author{Hao Pan}
\address{School of Applied Mathematics, Nanjing University of Finance and Economics, Nanjing 210046, People's Republic of China}
\email{haopan79@zoho.com}
\keywords{Piatetski-Shapiro prime, Green-Tao theorem, pseudorandom measure}
\subjclass[2010]{Primary 11P32; Secondary 05D10, 11B25, 11L07, 11N36}
\begin{abstract}
Let $m\geq 3$.
Suppose that
$$
1-2^{-2^{m^24^m}}<\gamma<1.
$$
Then the set
$$
\{p\text{ prime}:\, p=[n^{\frac1\gamma}]\text{ for some }n\in\N\}
$$
contains infinitely many non-trivial $m$-term arithmetic progressions.

\end{abstract}
\maketitle

\section{Introduction}
\setcounter{lemma}{0}
\setcounter{theorem}{0}
\setcounter{corollary}{0}
\setcounter{equation}{0}
\setcounter{conjecture}{0}

The well-known Green-Tao theorem \cite{GT08} asserts that for each $m\geq 3$, the set of all primes $\cP$ contains infintely many non-trivial arithmetic progressions of length $m$. That is, for each $m\geq 3$, there exists infinitely many pairs of positive integers $a,d$ such that $a,a+d,\ldots,a+(m-1)d$ are all primes.  In fact, Green and Tao proved a Szemer\'edi-type theorem for primes: any subset $A$ of $\cP$ with $\overline{d}_\cP(\cA)>0$ contains infinitely many non-trivial $m$-terms arithmetic progressions for each $m\geq 3$, where the relative upper density
$$
\overline{d}_\cP(\cA):=\limsup_{X\to\infty}\frac{|\cA\cap[1,X]|}{|\cP\cap[1,X]|}.
$$

The Green-Tao theorem has been generalized for the primes of some special forms, including the Chen primes (by Zhou \cite{Zhou09}), the primes $p$ such that the interval $[p+1,p+7\times 10^7]$ contains at least one prime (by Pintz \cite{Pintz16}), the primes of the form $x^2+y^2+1$ (by Sun and Pan \cite{SP}), etc.. For the further generalizations of the  Green-Tao theorem, the reader may refer to
\cite{GT10, Le11, LW14, Tao06, TZ08,TZ015}.

The Piatetski-Shapiro prime is another kind of primes of the special form.
A well-known conjecture asserts that there exist infinitely many primes of the form $n^2+1$. This conjecture is far from solved under the current techniques, though Iwaniec \cite{Iw78} proved that there exist infinitely many $n$ such that $n^2+1$ has at most two prime factors.
In 1953, Piatetski-Shapiro \cite{PS53} considered another approximation to this conjecture.
Suppose that $\gamma\in(0,1)$ and $\gamma^{-1}\not\in\Z$. Then
$x^{\frac1\gamma}$ can be viewed as a polynomial of degree $\gamma^{-1}$.
Let
$$
\N^{\frac1\gamma}:=\big\{[n^\frac1\gamma]:\,n\in\N\big\}
$$
where $[x]$ denotes the integer part of $x$,
i.e., $\N^{\frac1\gamma}$ is the set of all non-negative integers of the form $[n^{\frac1\gamma}]$. Clearly $|\N^{\frac1\gamma}\cap[0,x]|=x^{\gamma}+O(1)$ for any sufficiently large $x$.
Piatetski-Shapiro \cite{PS53} proved that for any $\gamma\in(11/12,1)$, there exist infinitely many primes lying in $\N^{\frac1\gamma}$. Explicitly, he obtained that
\begin{equation}\label{ShapiroPNT}
|\cP_\gamma\cap[1,x]|=(1+o(1))\cdot\frac{x^\gamma}{\log x}
\end{equation}
as $x\to +\infty$, where
$$
\cP_{\gamma}:=\{p\text{ prime}:\,p\in\N^{\frac1\gamma}\},
$$
is the set of those primes of the form $[n^{\frac1\gamma}]$. Nowadays, the primes in $\cP_{\gamma}$ is also called {\it Piatetski-Shapiro primes}.  The result of Piatetski-Shapiro has been improved many times. The current best known result on the distribution of the Piatetski-Shapiro primes is due to Rivat and Wu \cite{RW01}, they proved that
\begin{equation}
|\cP_\gamma\cap[1,x]|\gg\frac{x^\gamma}{\log x}
\end{equation}
for any $\gamma\in(205/243,1)$.

On the other hand, with the help of Heath-Brown identity \cite{He83}, Balog and Friedlander \cite{BF92} proved that for any $\gamma\in(20/21,1)$, every sufficiently large odd integer can be represented as the sum of three primes lying in $\cP_\gamma$.
The key ingredient of Balog and Friedlander's proof is the following estimation of exponential sum:
\begin{equation}
\frac1\gamma\sum_{p\in\cP_\gamma\cap[1,x]}p^{1-\gamma}\log p\cdot e(p\theta)= \sum_{p\in[1,x]}\log p\cdot e(p\theta)+O(x^{1-\epsilon})
\end{equation}
for any $\gamma\in(8/9,1)$, where $e(\theta)=\exp(2\pi\sqrt{-1}\theta)$ and $\epsilon>0$ is a constant only depending on $\gamma$. Clearly using the discussions of Balog and Friedlander, one can easily prove that $\cP_\gamma$ contains infinitely many non-trivial three-term arithmetic progressions for each $\gamma\in(20/21,1)$. Furthermore, with the help of the transference principle, Mirek \cite{Mi15} obtained a Roth-type theorem for the Piatetski-Shapiro primes and showed that for any $\gamma\in(71/72,1)$ and any subset $\cA\subseteq\cP_\gamma$ with
$$
\limsup_{x\to+\infty}\frac{|\cA\cap[1,x]|}{x^\gamma(\log x)^{-1}}>0,
$$
$\cA$ contains infinitely many non-trivial three-term arithmetic progressions.

It is natural to ask whether the Piatetski-Shapiro primes contain longer non-trivial arithmetic progressions. In this paper, we shall prove the following Szemer\'edi-type result for Piatetski-Shapiro primes.
\begin{theorem}
\label{main}
Let $m\geq 3$.
Suppose that
\begin{equation}\label{lowerboundgamma}
1-2^{-2^{m^24^m}}<\gamma<1
\end{equation}
and $\cA$ is a subset of $\cP_\gamma$ with
$$
\limsup_{x\to+\infty}\frac{|\cA\cap[1,x]|}{x^\gamma(\log x)^{-1}}>0.
$$
Then $\cA$
contains infinitely many non-trivial $m$-term arithmetic progressions.
\end{theorem}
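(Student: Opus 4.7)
The plan is to deduce Theorem~\ref{main} from the relative Szemerédi theorem by constructing a pseudorandom measure $\nu$ that majorises a normalised weight supported on $\cA$. Fix a dyadic scale $N$ and apply the $W$-trick with $W=\prod_{p\leq w}p$ for a slowly growing $w=w(N)$, selecting a residue $b$ coprime to $W$. Define on $[N,2N]$ the weight
$$
f(n) := \frac{1}{\gamma}(Wn+b)^{1-\gamma}\log(Wn+b)\cdot \mathbf{1}_{\cA}(Wn+b).
$$
The positive density hypothesis on $\cA$ together with \eqref{ShapiroPNT}, after pigeonholing over residues modulo $W$, forces $\E_{n\in[N,2N]} f(n) \geq \delta$ for some fixed $\delta>0$, some fixed $(W,b)$, and arbitrarily large $N$. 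Since any non-trivial $m$-AP among $\{Wn+b:n\in[N,2N]\}\cap\cA$ produces an $m$-AP in $\cA$, it suffices to prove $\E_{n,r}\prod_{j=0}^{m-1} f(n+jr)\gg_{m,\delta} 1$ for infinitely many $N$.

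To build $\nu$, introduce the Piatetski--Shapiro indicator $a_\gamma(n) := [-n^\gamma]-[-(n+1)^\gamma]$, which equals $1$ precisely when $n\in\N^{1/\gamma}$, and use the decomposition
$$
a_\gamma(n) = \big((n+1)^\gamma-n^\gamma\big) + \big(\{-(n+1)^\gamma\}-\{-n^\gamma\}\big)
$$
with main term $\gamma n^{\gamma-1}+O(n^{\gamma-2})$. With the Goldston--Yıldırım truncated divisor sum $\Lambda_R(k) := \sum_{d\mid k,\,d\leq R}\mu(d)\log^2(R/d)$ where $R=N^{\eta}$ is a small fixed power of $N$, I would set
$$
\nu(n) := \kappa\cdot a_\gamma(Wn+b)\cdot (Wn+b)^{1-\gamma}\cdot\frac{\phi(W)}{W}\cdot\frac{\Lambda_R(Wn+b)^2}{\log R}
$$
with a normalising constant $\kappa$ chosen so that $\delta\nu\geq f$ pointwise (outside a sparse exceptional set that can be absorbed). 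Dropping the two extra factors recovers the classical Green--Tao majorant for primes; the product $a_\gamma(Wn+b)\cdot(Wn+b)^{1-\gamma}$ has main contribution $1$, so $\E\nu=1+o(1)$ is the expected normalisation.

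The technical heart of the argument is to verify the $t$-linear forms condition
$$
\E_{\mathbf{x}\in[-N,N]^d}\prod_{i=1}^t\nu(L_i(\mathbf{x})) = 1+o(1)
$$
for every family of affine forms $L_1,\dots,L_t:\Z^d\to\Z$ arising in the Green--Tao / Conlon--Fox--Zhao counting of $m$-APs. Approximating each $\{-L_i(\mathbf{x})^\gamma\}$ by a Vaaler--Beurling trigonometric polynomial of length $H$, the average splits into (i) the standard Goldston--Yıldırım computation, yielding the main term $1+o(1)$, and (ii) oscillatory corrections of the shape
$$
\sum_{\mathbf{x}\in[-N,N]^d}e\!\Big(\textstyle\sum_i h_i\,L_i(\mathbf{x})^\gamma\Big)\prod_i\Lambda_R(WL_i(\mathbf{x})+b)^2,\qquad \mathbf{h}\neq \mathbf{0}.
$$
These would be attacked by decomposing $\Lambda_R^2$ into type-I and type-II bilinear pieces via a Heath--Brown identity and then applying iterated Weyl / Van der Corput differencing to the phase $\sum_i h_i L_i(\mathbf{x})^\gamma$, exploiting $(y^\gamma)^{(k)}\asymp y^{\gamma-k}$. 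Each differencing step roughly doubles the number of active forms while gaining only a small power of $N^{1-\gamma}$; accommodating the $t=O(m^2\cdot 4^m)$ forms present in the most complex Green--Tao configurations requires a tower of such iterations, and a net power saving survives exactly when $1-\gamma<2^{-2^{m^2 4^m}}$. \emph{This uniform multilevel exponential sum estimate is the principal obstacle}, and its quantitative breakdown is precisely the origin of the threshold \eqref{lowerboundgamma}.

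With the linear forms condition (and, if one works within the original Green--Tao framework rather than its Conlon--Fox--Zhao refinement, the parallel correlation condition verified analogously) in hand, one invokes the relative Szemerédi theorem to convert $\E f\geq\delta$ and $f\leq\delta^{-1}\nu$ into the lower bound $\E_{n,r}\prod_j f(n+jr)\gg_{m,\delta}1$. Discarding the $O(N)$ trivial $r=0$ contribution and letting $N\to\infty$ along the dyadic scales where $\E f\geq\delta$ then produces infinitely many non-trivial $m$-term arithmetic progressions inside $\cA\subseteq\cP_\gamma$, completing the proof.
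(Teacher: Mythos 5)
Your overall architecture matches the paper's: the $W$-trick, a Goldston--Y{\i}ld{\i}r{\i}m-weighted pseudorandom majorant, the Conlon--Fox--Zhao relative Szemer\'edi theorem (so only the linear forms condition is needed), and a reduction of that condition to exponential sum estimates. You also correctly flag the exponential sum bound as the principal obstacle and correctly locate the origin of the exotic threshold for $\gamma$. But the device you propose for overcoming that obstacle is not the one the paper uses, and as sketched it does not obviously close the gap.

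You propose to attack
$$
\sum_{\mathbf{x}}e\Big(\sum_i h_i L_i(\mathbf{x})^\gamma\Big)\prod_i\Lambda_R\big(WL_i(\mathbf{x})+b\big)^2
$$
by first splitting $\Lambda_R^2$ into type-I/type-II pieces via a Heath--Brown identity and then iterating Weyl/van der Corput differencing on the phase. Two problems. First, the Heath--Brown identity is designed to decompose $\Lambda$, not the truncated divisor sum $\Lambda_R$; the latter is \emph{already} a double Dirichlet series $\sum_{d,e\le R}\mu(d)\mu(e)\log(R/d)\log(R/e)$, which one simply expands, pulls out of the $\mathbf{x}$-sum, and removes at the price of a congruence condition and a factor $R^{O(h)}$ -- exactly what the paper does around equation (4.12). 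Running Heath--Brown here is both unnecessary and ill-fitted. Second, and more seriously, iterated Weyl differencing does not by itself resolve the sign problem the paper highlights just before Section 5: since the $h_i$ may have mixed signs, after differencing the phase $\sum_i h_i\big(L_i(\mathbf{x}+\mathbf{u})^\gamma-L_i(\mathbf{x})^\gamma\big)$ still has mixed contributions, and there is no clean way to certify a definite-sign second derivative, nor an obvious terminating recursion for a non-polynomial phase. Your own remark that "each differencing step roughly doubles the number of active forms" is precisely the symptom that this route stalls.

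The paper's actual solution is the content of Section 5, which your sketch does not touch. Lemma \ref{matrix} performs integer column operations on the coefficient matrix $(a_{ij})$ so that, after a nonsingular change of variables, the leading coefficients $a_{11},\dots,a_{h1}$ in the first variable are \emph{distinct nonzero integers} bounded by $h^4m$. This reduces the multidimensional sum to a one-dimensional sum with phase $F(y)=\sum_i\theta_i(\alpha_i y+\beta_i)^\gamma$ where $|\alpha_1|<\dots<|\alpha_h|$. Lemma \ref{psisum} then shows, by a careful inductive choice of thresholds $L_j,R_j$, that there exists a single derivative order $r$ -- bounded purely in terms of $M$ and $h$ -- for which $|F^{(r)}(y)|\asymp\max_i|\theta_i|\cdot N^{\gamma-r}$ uniformly. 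With that, one applies van der Corput's $r$-th derivative test \emph{once}, with no iteration, and the doubly-exponential dependence on $m$ comes from the bound on $r$, not from an iteration tower. These two lemmas are the genuinely new technical content; without them your sketch has a gap exactly where you identified the difficulty.

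Two smaller points. The paper does not use the exact indicator $a_\gamma(n)=[-n^\gamma]-[-(n+1)^\gamma]$ inside $\nu$; it uses a smooth Vinogradov bump $\rho$ with rapidly decaying Fourier coefficients that majorises the relevant fractional-part condition, so that the Fourier cutoff $|j|\le H_1$ incurs only an $O(N^{-h-2})$ error with no additional boundary terms to absorb. Using the exact indicator plus Vaaler--Beurling would also work but adds bookkeeping for the approximation error. And your $f$ lacks the $\phi(W)/W$ factor that your $\nu$ carries, so $f\le\delta^{-1}\nu$ would force $\kappa\gg W/\phi(W)\to\infty$; this is harmless once you normalise $f$ as $\lambda_{W,b}$ is normalised in (\ref{lambdaWbn}), but as written it is off.
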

Of course, the lower bound of $\gamma$ in (\ref{lowerboundgamma}) is not very optimal, and surely could be improved via some more accurate calculations. However, we believe that with the help of the current techniques, it is impossible to obtain a lower bound of $\gamma$ independent on $m$. In fact, by the heuristic discussions, (\ref{ShapiroPNT}) should be valid for any $\gamma\in(0,1)$ with $\gamma^{-1}\not\in\Z$, which is evidently very far from being proved. On the other hand, Green and Tao \cite{GT10} introduced the complexity of linear equations and showed that the non-trivial $m$-term arithmetic progressions factly correspond to a linear equation with the complexity $m-2$. Now for the simplest equation $p=[n^{\frac1\gamma}]$, we only can prove the existence of solutions when $205/243<\gamma<1$. So for those equations with higher complexity, the lower bound of $\gamma$ would naturally become worse.

Let us introduce the strategy for the proof of Theorem \ref{main}. The key ingredient is to construct a suitable pseudorandom measure for Piatetski-Shapiro primes and verify the corresponding linear forms condition. By revising the construction of Green-Tao, it is easy to obtain such a  pseudorandom measure. However, our main difficulty is how to verify the linear forms condition. One reason is that the Piatetski-Shapiro prime theorem  (\ref{ShapiroPNT}) arises from the estimations of exponential sums, rather than the sieve method. Explicitly, in order to prove Theorem \ref{main}, we have to give a non-trivial upper bound for the exponential sum of the form
\begin{equation}\label{expsumsipsii}
\sum_{X\leq n\leq X+Y}e\big(s_1\psi_1(n)+s_2\psi_2(n)+\cdots+s_h\psi_h(n)\big),
\end{equation}
where $s_1,\ldots,s_h\in\Z$ and $\psi_1,\ldots,\psi_h$ are some linear functions.

The classical van der Corput theorem is a useful tools to estimate the exponential sum of the form
$$
\sum_{X\leq n\leq X+Y}e\big(f(n)\big),
$$
where $f$ is a smooth function over the interval $[X,X+Y]$. Suppose that
\begin{equation}\label{f2derlambda}
\lambda\leq |f''(x)|\leq \alpha\lambda
\end{equation}
for each $x\in[X,X+Y]$, where $\alpha, \lambda>0$ are independent on $x$. Then the van der Corput theorem asserts that
$$
\sum_{X\leq n\leq X+Y}e\big(f(n)\big)\ll
\alpha Y(\lambda^{\frac1{2}}
+Y^{-1}\lambda^{-\frac1{2}}).
$$
Unfortunately, since it is possible that some of $s_i$ in (\ref{expsumsipsii}) are negative and the others are positive,
for the function $f(x)=s_1\psi_1(x)+\cdots+s_h\psi_h(x)$, generally (\ref{f2derlambda}) doesn't hold.

Our strategy is to apply the generalized van der Corput theorem concerning the derivatives of higher order. However, for $f(x)=\sum_{i}s_i\psi_i(x)$ and any given integer $r\geq 2$, we also don't know whether $|f^{(r)}(x)|$ could be bounded by $\lambda$ and $\alpha\lambda$. So there are two key ingredients in our proof.
First, with the help of some suitable linear transformations, the estimation of (\ref{expsumsipsii}) can be reduced to a special case that $\psi_i(x)=a_ix+b_i$ with $1\leq |a_1|<|a_2|<\ldots<|a_h|\leq M$.
Next,  we can show that for any $s_1,\ldots,s_h\in\Z$, there exists $r\in[2,c(M)]$, where $c(M)$ is a constant only depending on $M$, such that
$$
\lambda\leq |f^{(r)}(x)|\leq \alpha\lambda
$$
for any $x\in[X,X+Y]$, where $\alpha,\lambda>0$ and $\alpha$ only depends on $M$. Thus by using a generalization of van der Corput's theorem, we can get a desired upper bound for the exponential sum (\ref{expsumsipsii}).

The whole paper will be organized as follows. First, in the next section, we shall introduce Green-Tao's transference principle and give the definitions of the pseudorandom measure and the linear forms condition. Then in the third section, we shall construct a pseudorandom measure $\nu$ for Piatetski-Shapiro primes. In Section 4, in order to verify the linear forms condition for $\nu$, we shall reduce a
 Goldston-Y{\i}ld{\i}r{\i}m-type estimation to the estimation of some exponential sums. Section 5 is the core part of the proof of Theorem \ref{main}, which will contain two key auxiliary lemmas. Finally, in Section 6, we shall complete the proof
 of Theorem \ref{main} by combining a generalized van der Corput theorem with the two lemmas in Section 5.

Throughout this paper,
$f(x)\ll g(x)$ means $f(x)=O\big(g(x)\big)$ as $x$ tends to $\infty$.
Furthermore, without the additional mentions, the implied constants in $O$, $\ll$ and $\gg$ at most depends on $m$.
As usual, let $\phi(\cdot)$ and $\mu(\cdot)$ denote the Euler totient function and the M\"obius  function respectively. Furthermore, let $\log_k$ denote the $k$-th iteration of the logarithm function.

\section{Transference Principle}
\setcounter{lemma}{0}
\setcounter{theorem}{0}
\setcounter{corollary}{0}
\setcounter{equation}{0}
\setcounter{conjecture}{0}

Let $N$ be a sufficiently large prime and let  $\Z_N:=\Z/N\Z$ denote the cyclic group of order $N$. Suppose that $$\nu(n):\,\Z_N\to\R$$ is a non-negative function.
First, we introduce the definition of the $(h_0,k_0,m_0)$-linear forms condition. For $1\leq h\leq h_0$ and $1\leq k\leq k_0$, suppose that
$$
\psi_i(x_1,\ldots,x_k)=a_{i1}x_1+\cdots+a_{ik}x_k+b_i,\qquad 1\leq i\leq h,
$$
where $b_i$, $a_{ij}\in\Q$  and the numerators and denominators of those $a_{ij}$ all lie in $[-m_0,m_0]$.
Furthermore, assume that $\psi_i(x_1,\ldots,x_k)$ is not a rational multiple of $\psi_j(x_1,\ldots,x_k)$ for any distinct $i,j$.
Clearly, we may also view those $\psi_i$ as the linear functions over $\Z_N$ whenever $N>m_0$.
Then we say $\nu$ obeys the {\it $(h_0,k_0,m_0)$-linear forms condition}, provided that as $N\to\infty$,
\begin{equation}
\frac{1}{N^k}\sum_{x_1,\ldots,x_k\in\Z_N}\nu\big(\psi_1(x_1,\ldots,x_k)\big)\cdots
\nu\big(\psi_h(x_1,\ldots,x_k)\big)=1+o_{h_0,k_0,m_0}(1),
\end{equation}
for any $1\leq h\leq h_0$, $1\leq k\leq k_0$ and those $\psi_1,\ldots,\psi_h$.

Next, we say $\nu$ satisfies the {\it $k_0$-correlation condition}, if for any $1\leq k\leq k_0$, there exists a non-negative weight function $\tau_k:\,\Z_N\to\R$ such that
$$
\frac1N\sum_{x\in\Z_N} \tau_k(x)^s=O_{k,s}(1)
$$
for any integer $s\geq 1$, and
\begin{equation}
\frac1N\sum_{x\in\Z_N} \nu(x+b_1)\cdots\nu(x+b_k)\leq \sum_{1\leq i<j\leq k}\tau_k(b_i-b_j)
\end{equation}
for any $b_1,\ldots,b_k\in\Z_N$.

Call $\nu(x)$ a {\it $m$-pseudorandom measure}, provided that $\nu$ obeys $(2^{m-1}m,3m-4,m)$-linear forms condition and $2^{m-1}$-correlation condition.
The important transference principle of Green and Tao \cite[Theorem 3.5]{GT08} asserts that
\begin{theorem}
Suppose that $\delta>0$ and $m\geq 3$. Let $f(x)$ be a function over $\Z_N$ such that
$$
\frac1N\sum_{x\in\Z_N}f(x)\geq\delta,
$$
and
$$
0\leq f(x)\leq \nu(x)
$$
for each $x\in\Z_N$, where $\nu$ is a $m$-pseudorandom measure over $\Z_N$. Then as $N\to\infty$,
\begin{equation}\label{fkaa}
\frac1{N^2}\sum_{x,y\in\Z_N}f(x)f(x+y)\cdots f\big(x+(m-1)y\big)\geq c(m,\delta)+o_{k,\delta}(1),
\end{equation}
where $c(m,\delta)$ is a constant only depending on $m$ and $\delta$.
\end{theorem}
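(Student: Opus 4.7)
The plan is to follow the standard Green--Tao transference strategy: decompose $f$ into a Gowers-uniform component and an essentially bounded component, apply Szemer\'edi's theorem to the bounded part, and control the uniform part via a generalized von Neumann theorem. All three ingredients are driven by the two axioms packaged into $m$-pseudorandomness.

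First, I would establish a \emph{generalized von Neumann theorem}: if $g_0,g_1,\ldots,g_{m-1}\colon\Z_N\to\R$ satisfy $|g_i(x)|\leq\nu(x)+1$ pointwise, and if $\min_{i}\|g_i\|_{U^{m-1}(\Z_N)}=o(1)$, then
$$
\frac1{N^2}\sum_{x,y\in\Z_N}g_0(x)g_1(x+y)\cdots g_{m-1}\bigl(x+(m-1)y\bigr)=o(1).
$$
The proof is $m-1$ applications of Cauchy--Schwarz: each step doubles the number of linear factors and introduces a new averaging variable, so after all $m-1$ steps one is comparing the AP count against an average over a configuration of at most $2^{m-1}m$ linear forms in $3m-4$ variables, with coefficients of height bounded in terms of $m$. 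The $(2^{m-1}m,3m-4,m)$-linear forms condition then forces that weighted average to be $1+o(1)$ when each $g_i$ is replaced by $\nu$, yielding the desired estimate.

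Second, I would carry out the \emph{Koopman--von Neumann / dense model decomposition}. Introduce the dual function
$$
\mathcal{D}F(x):=\E_{\vec h\in\Z_N^{m-1}}\prod_{\omega\in\{0,1\}^{m-1}\setminus\{\mathbf 0\}}F\bigl(x+\omega\cdot\vec h\bigr),
$$
and build iteratively a $\sigma$-algebra $\cB$ generated by the level sets of a bounded number of dual functions of $\nu$-weighted functions. Stopping when no further $U^{m-1}$-energy can be extracted gives $f=f_1+f_2$ with $f_1:=\E(f\mid\cB)$, satisfying $\|f_2\|_{U^{m-1}}=o(1)$, $\E f_1\geq\delta-o(1)$, and $0\leq f_1\leq 1+o(1)$ pointwise. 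The linear forms condition is what bounds the inner products $\langle F,\mathcal{D}F\rangle$ driving each energy increment, while the $2^{m-1}$-correlation condition controls the higher moments of products of dual functions that appear when bounding the $L^\infty$-overflow of $f_1$.

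Finally, I would expand $\prod_{i=0}^{m-1}f(x+iy)=\prod_{i=0}^{m-1}(f_1+f_2)(x+iy)$ into $2^m$ terms. The pure $f_1$ term is bounded below by applying Szemer\'edi's theorem (as a black box) to $f_1/(1+o(1))$, which has mean at least $\delta/2$ and is bounded by $1$; this produces the constant $c(m,\delta)$. Each of the remaining $2^m-1$ terms contains at least one factor of $f_2$, with all factors dominated by $\nu+1$, and so is $o(1)$ by the generalized von Neumann theorem applied to that term. Summing gives \eqref{fkaa}.

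The main obstacle is the middle step. Executing the energy-increment argument forces one to simultaneously control two structurally different expressions: correlations of linear forms (for bounding the energy increment at each stage) and higher moments of products of dual functions (for bounding how far $f_1$ can exceed $1$). Only the joint use of the linear forms condition and the correlation condition makes both controls available, which is precisely why $m$-pseudorandomness is formulated as the combination of these two axioms.
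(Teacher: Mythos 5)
Your outline accurately reproduces the structure of Green and Tao's original proof of the transference principle: the generalized von Neumann theorem via $m-1$ iterated applications of Cauchy--Schwarz (which is exactly what dictates the parameters $(2^{m-1}m,3m-4,m)$ in the linear forms condition), the energy-increment construction of a dense model $f_1=\E(f\mid\cB)$ from level sets of dual functions with the correlation condition controlling the higher moments needed to keep $f_1$ essentially bounded, and the final expansion with Szemer\'edi's theorem applied to the bounded part and von Neumann killing every term containing an $f_2$. Note, however, that the paper does not reprove this statement at all; it cites it directly as Theorem 3.5 of \cite{GT08} and uses it as a black box, so there is no internal proof to compare against, and your sketch is simply a faithful summary of that source.
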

With the help of the arguments of Goldston and
Y\i ld\i r\i m \cite{GY03}, for any $m\geq 3$, Green and Tao constructed a $m$-pseudorandom measure $\nu(n)$ over $\Z_N$ such that
\begin{equation}\label{nuc0lambda}
\nu(n)\geq c_0\cdot\lambda_{W,b}(n)
\end{equation}
for any $n\in[\epsilon_0N,2\epsilon_0N]$, where $W=\prod_{p\leq\log_4N}p$,
\begin{equation}\label{lambdaWbn}
\lambda_{W,b}(n)=\begin{cases}\frac{\phi(W)}W\cdot\log(Wn+b),&\text{if }Wn+b\text{ is prime},\\
0,&\text{otherwise},
\end{cases}
\end{equation}
and $c_0,\epsilon_0>0$ are two small constants only depending on $m$.
Let
$$
f(x)=\begin{cases}c_0\lambda_{W,b}(x),&\text{if }x\in[\epsilon_0N,2\epsilon_0N],\\
0,&\text{otherwise},
\end{cases}
$$
be a function over $\Z_N$.
According to the Siegel-Walfisz theorem, we have
$$
\sum_{x\in\Z_N}f(x)=c_0\epsilon_0N+o(N)
$$
as $N\to\infty$.
It follows from (\ref{fkaa}) and (\ref{nuc0lambda}) that
\begin{equation}\label{xrfcN2}
\sum_{x,r\in\Z_N}f(x)f(x+r)\cdots f\big(x+(m-1)r\big)\geq c_1N^2
\end{equation}
for some constant $c_1>0$. Clearly $f(x)f(x+r)\cdots f(x+(m-1)r)>0$ implies that $
x,x+r,\ldots,x+(m-1)r$ modulo $N$ all lie in the interval $[\epsilon_0N,2\epsilon_0N]$.
Since $\epsilon_0<1/2$ and $x\in [\epsilon_0N,2\epsilon_0N]$, if $1\leq r<N/2$, it is impossible that $x+r-N\in [\epsilon_0N,2\epsilon_0N]$. Hence we must have $x+r\in [\epsilon_0N,2\epsilon_0N]$, as well as $x+2r,\ldots,x+(m-1)r$. Suppose that $N/2<r<N$. Letting $r'=N-r$ and $x'\in [\epsilon_0N,2\epsilon_0N]$ with $x'\equiv x+(m-1)r\pmod{N}$, we also have $x'+r',\ldots,x'+(m-1)r'\in[\epsilon_0N,2\epsilon_0N]$.
Recall that $f(n)\leq 2c_0\phi(W)\log N/W$. We obtain that there exist at least
$$
\frac{c_1N^2}{2}\cdot\bigg(\frac{W}{2c_0\phi(W)\log N}\bigg)^m-\epsilon_0N
$$
pairs of $(x,r)$ with $1\leq x,r<N$ such that $Wx+b,\ldots,W(x+(m-1)r)+b$ form a non-trivial arithmetic progression in primes..

Green and Tao's transference principle becomes a powerful tools to prove the relative Szemer\'edi-type theorems nowadays.
Furthermore, in \cite{CFZ15}, Conlon, Fox and Zhao weakened the requirements concerning the pseudorandom measures. In fact, they defined the notion of $k$-linear forms condition. Let $\nu$ be a non-negative function over $\Z_N$. Suppose that as $N\to\infty$,
$$
\frac1{N^{2k}}\sum_{x_1,y_1,\ldots,x_k,y_k\in\Z_N}\prod_{j=1}^k\prod_{\substack{
I,J\subseteq\{1,\ldots,k\}\\
I\cup J=\{1,\ldots,k\}\setminus\{j\}\\ I\cap J=\emptyset
}}\nu\bigg(\sum_{s\in I}(s-j)x_s+\sum_{t\in J}(t-j)y_t\bigg)^{\delta_{j,I,J}}=1+o(1)
$$
for any choice of $\delta_{j,I,J}\in\{0,1\}$. Then we say $\nu$ obeys the {\it $k$-linear forms condition}.
Conlon, Fox and Zhao proved that
\begin{theorem}\label{CFZThm}
Suppose that $\delta>0$ and $m\geq 3$. Let $f(x)$ be a non-negative function over $\Z_N$ such that
$$
\frac1N\sum_{x\in\Z_N}f(x)\geq\delta,
$$
and
$$
0\leq f(x)\leq \nu(x)
$$
for some function $\nu$ obeys the $m$-linear forms condition. Then (\ref{fkaa}) is also valid.
\end{theorem}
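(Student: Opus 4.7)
The plan is to establish (\ref{fkaa}) via a three-stage transference that weakens Green--Tao's hypotheses by dropping the correlation condition; only the $m$-linear forms condition as defined above is used. The three stages are a dense model theorem, a generalized von Neumann theorem with a densification twist, and Szemer\'edi's theorem applied to the bounded model.

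For the dense model step, I would produce a function $g:\Z_N\to[0,1]$ with $\frac1N\sum_x g(x)=\frac1N\sum_x f(x)+o(1)\geq\delta-o(1)$ and $\|f-g\|_{U^{m-1}}=o(1)$, where $U^{m-1}$ is the Gowers uniformity norm. This would follow from a Hahn--Banach / min--max argument in the style of Gowers or Reingold--Trevisan--Tulsiani--Vadhan, whose only quantitative input is the estimate $|\frac{1}{N}\sum_x(\nu(x)-1)\phi(x)|=o(1)$ for $\phi$ in the unit ball of the $U^{m-1}$-dual norm. Expanding $\phi$ as a dual cube function and applying Cauchy--Schwarz iteratively reduces this to cube-shaped averages of products of $\nu$ that are exactly controlled by the $m$-linear forms condition.

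For the generalized von Neumann step, I would expand
$$
\frac1{N^2}\sum_{x,r\in\Z_N}\prod_{i=0}^{m-1}f(x+ir)-\frac1{N^2}\sum_{x,r\in\Z_N}\prod_{i=0}^{m-1}g(x+ir)
$$
as a telescoping sum over $m$ hybrid terms, in each of which one factor is $f-g$ and the remaining $m-1$ factors are a mix of $f$'s (bounded by $\nu$) and $g$'s (bounded by $1$). Iterated Cauchy--Schwarz would bound each hybrid by $\|f-g\|_{U^{m-1}}$ times a product of averages of $\nu$ along cube configurations, and the $m$-linear forms condition forces each such product to equal $1+o(1)$. The key innovation of Conlon--Fox--Zhao is the \emph{densification}: whenever a $\nu$-majorant would ordinarily be applied to a coordinate already bounded by the model $g$, one substitutes the constant $1$, which is precisely what removes the need for a correlation condition. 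The cube averages arising after the Cauchy--Schwarzes are indexed by pairs $(I,J)$ of subsets partitioning $\{1,\ldots,m-1\}\setminus\{j\}$, matching the shape of the $m$-linear forms condition given above.

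Finally, since $g:\Z_N\to[0,1]$ has mean $\geq\delta/2$ for large $N$, the quantitative Szemer\'edi theorem delivers
$$
\frac1{N^2}\sum_{x,r\in\Z_N}\prod_{i=0}^{m-1}g(x+ir)\geq c_{\mathrm{Sz}}(m,\delta/2),
$$
and combining with the previous paragraph yields (\ref{fkaa}). The main obstacle is the densification bookkeeping: one must check that every intermediate cube average of $\nu$-factors arising from iterated Cauchy--Schwarz, with \emph{every} possible pattern of factors already densified to the bounded model, fits the template of the $m$-linear forms condition in the stated form. The subset-indexed product in that definition is engineered precisely to accommodate these patterns, so the whole argument goes through provided one tracks which coordinates have been densified at each Cauchy--Schwarz step.
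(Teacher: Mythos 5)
Theorem~\ref{CFZThm} is not proved in this paper: it is quoted from Conlon, Fox and Zhao~\cite{CFZ15}, so there is no in-paper argument for you to have matched, and the relevant comparison is with their proof. Your three-stage outline --- a dense model theorem, a relative counting (generalized von Neumann) lemma proved via densification, and then the quantitative Szemer\'edi theorem applied to the bounded model --- is the correct architecture of that proof, and your intuition that the shape of the $m$-linear forms condition is engineered to match the cube configurations produced by iterated Cauchy--Schwarz is also on target.

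Where you deviate from Conlon--Fox--Zhao is in two specifics, and both matter. You propose to run the dense model step in the $U^{m-1}$ Gowers norm and to verify the hypothesis by testing $\nu-1$ against the $U^{m-1}$-dual unit ball. Controlling that dual --- in particular products of dual cube functions --- is exactly what forced Green and Tao to assume the correlation condition in addition to the linear forms condition; dispensing with that assumption is the entire point of~\cite{CFZ15}. Their dense model and counting arguments are not carried out in $U^{m-1}$ but in a strictly weaker cut (box) norm on a weighted $(m-1)$-uniform hypergraph over $m$ vertex classes, whose dual ball is controlled by finitely many direct applications of the $m$-linear forms condition with no moment estimates. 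Under the stated hypothesis alone, the $U^{m-1}$ dense model theorem you invoke is not known to be available, so this substitution is a genuine gap rather than a matter of taste. Second, your description of densification as ``substituting the constant $1$'' for an already-modeled coordinate is not how the device works: densification replaces the Cauchy--Schwarz \emph{partner} of a $\nu$-bounded edge weight by a fiberwise conditional expectation, and it is the linear forms condition that forces this average to be at most $1+o(1)$ off a negligible exceptional set; one then iterates, densifying one edge weight at a time until all remaining factors are bounded. These two corrections --- the choice of a weaker norm and the precise form of densification --- are exactly the technical content that makes the correlation condition superfluous, so a reconstruction that omits them has the right shape but not yet a proof.
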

Clearly the $m$-linear forms condition is weaker than
the $(2^{m-1}m,2m,m)$-linear forms condition.
So in order to get a relative Szemer\'edi-type theorem for the Piatetski-Shapiro primes,  we only need to construct a suitable pseudorandom measure $\nu$ over $\Z_N$ and verify the $(2^{m-1}m,2m,m)$-linear forms condition for $\nu$. This is our main task in the remainder sections.

\section{Pseudorandom Measure}
\setcounter{lemma}{0}
\setcounter{theorem}{0}
\setcounter{corollary}{0}
\setcounter{equation}{0}
\setcounter{conjecture}{0}

Let $\gamma$ and $m$ be given in Theorem \ref{main}.
In this section, we shall construct a pseudorandom measure $\nu$ for those primes in $\cP_\gamma$.
Let
$$
\kappa_1=\frac{1}{2^{m}\cdot(m+4)!},\qquad
\kappa_2=\kappa_1\cdot\bigg(1+\frac{1}{2^{4m-2}m^5}\bigg).
$$
Suppose that $X$ is a sufficiently large positive integer. Let
$$
W=\prod_{\substack{p\leq\log_4X\\ p\text{ is prime}}}p.
$$
According to the Piatetski-Shapiro prime number theorem,
$$
\big|\cP_\gamma\cap[(\kappa_1+\kappa_0)X,(\kappa_2-\kappa_0)X]\big|=\big(\kappa_2-\kappa_1-2\kappa_0+o(1)\big)\cdot\frac{X^{\gamma}}{\log X},
$$
where $\kappa_0=(\kappa_2-\kappa_1)/100$.
Recall that $\cA$ is a subset of $\cP_\gamma$ with a positive relatively upper density. Let
$$
\fd_0=\limsup_{x\to+\infty}\frac{|\cA\cap [1,x]|}{|\cP_\gamma\cap[1,x]|}
$$
be the relatively upper density of $\cA$.
Then we may choose a sufficiently large $X$ such that
$$
\big|\cA\cap[(\kappa_1+\kappa_0)X,(\kappa_2-\kappa_0)X\big|\geq
\frac{4\fd_0(\kappa_2-\kappa_1)}{5}\cdot\frac{X^{\gamma}}{\log X}.
$$

Let $N$ be a prime lying in $[(1-\kappa_0)X/W,(1+\kappa_0)X/W]$. By the prime number theorem, such prime $N$ always exists. Clearly $$[\kappa_1WN,\kappa_2WN]\supseteq [(\kappa_1+\kappa_0)X,(\kappa_2-\kappa_0)X].$$
By the pigeonhole principle, there exists $1\leq b_0\leq W$ with $(b_0,W)=1$ such that
\begin{equation}\label{WnbA}
\big|\{Wn+b_0\in\cA:\, \kappa_1N\leq n\leq \kappa_2N\}\big|\geq\frac1{\phi(W)}\cdot\frac{\fd_0(\kappa_2-\kappa_1)}{2}\cdot\frac{(WN)^{\gamma}}{\log N}.
\end{equation}

Let
$$
\eta_0=2\gamma\cdot (\kappa_1WN)^{\gamma-1}.
$$
For any $n\in[\kappa_1N,\kappa_2N]$, we have
$$
(Wn+b_0+1)^{\gamma}-(Wn+b_0)^\gamma\leq \gamma\cdot (Wn+b_0)^{\gamma-1}\leq\eta_0.
$$
On the other hand, clearly $Wn+b_0\in\N^{\frac1\gamma}$ if and only if $$
\big[(Wn+b_0+1)^{\gamma}\big]>\big[(Wn+b_0)^{\gamma}\big]
\qquad\text{or}\qquad (Wn+b_0)^{\gamma}\in\N.$$
Clearly $\big[(Wn+b_0+1)^{\gamma}\big]>\big[(Wn+b_0)^{\gamma}\big]$
is equivalent to $$
(Wn+b_0+1)^{\gamma}-(Wn+b_0)^{\gamma}\geq 1-\big\{(Wn+b_0)^{\gamma}\big\}.
$$
So if $Wn+b_0\in\N^{\frac1\gamma}$, then
$$
1-\big\{(Wn+b_0)^\gamma\big\}\leq\eta_0
\qquad\text{or}\qquad
\big\{(Wn+b_0)^\gamma\big\}=0.
$$

Let
$$
h_0=2^{m-1}m,\qquad
k_0=2m,
$$

$$
\delta_0=\frac1{3}\bigg(\gamma+\frac{1}{2^{2^{m^24^m}}}-1\bigg),
$$
and $r_0$ be the least positive integer such that
$$
\bigg(r_0-\frac{h+2}{1-\gamma+\delta_0}\bigg)\cdot(1-\gamma+\delta_0)> r_0(1-\gamma)
$$
for each $1\leq h\leq h_0$.
According to \cite[Lemma 12 of Chapter I]{Vi54}, there exists a smooth function $\rho(t)$ with the period $1$ such that

\medskip
\noindent(i) $0\leq \rho(t)\leq 1$ for any $t$ and
\begin{equation}
\rho(t)=\begin{cases}
1, &\text{if }1-\eta_0\leq t\leq 1,\\
0, &\text{if }\eta_0\leq t\leq 1-2\eta_0,
\end{cases}
\end{equation}

\noindent(ii)
\begin{equation}\label{rhoalphasum}
\rho(t)=2\eta_0+\sum_{|j|\geq 1}\alpha_je(jt),
\end{equation}
where
\begin{equation}\label{fourieralphabound}
\alpha_j\ll_{r_0}\min\bigg\{\eta_0,\ \frac1{|j|},\ \frac1{\eta_0^{r_0}|j|^{r_0+1}}\bigg\}.
\end{equation}

\medskip\noindent
Thus for any $n\in[\kappa_1N,\kappa_2N]$, $Wn+b_0\in\N^{\frac1\gamma}$ implies that $$\rho\big((Wn+b_0)^\gamma\big)=1.$$

Let
$$
R=N^{\delta_0}.
$$
Define
\begin{equation}\label{LambdaRn}
\Lambda_R(n):=\sum_{\substack{d\mid n\\ d\leq R}}\mu(d)\log\frac{R}{d}.
\end{equation}
Clearly if $n>R$ is a prime, then
$\Lambda_R(n)=\log R$.
Let
\begin{equation}
\varpi(n)=\frac{\Lambda_R(n)^2}{\log R}\cdot \frac{\rho(n^\gamma)}{2\eta_0},
\end{equation}
and define
\begin{equation}\label{nufunc}
\nu(n):=\begin{cases}\frac{\phi(W)}W\cdot\varpi(Wn+b_0),&\text{if }n\in[\kappa_1N,\kappa_2N],\\
1,&\text{otherwise}.
\end{cases}
\end{equation}
Set
\begin{equation}\label{primefunc}
f_0(n)=\begin{cases} \frac{\delta_0}{3\eta_0}\cdot\lambda_{W,b_0}(n),&\text{if }n\in[\kappa_1N,\kappa_2N]\text{ and }Wn+b_0\in\cA,\\
0,&\text{otherwise},
\end{cases}
\end{equation}
where $\lambda_{W,b_0}$ is the one given in (\ref{lambdaWbn}).
If $n\in[\kappa_1N,\kappa_2N]$ and $Wn+b_0\in\cP_{\gamma}$, then
\begin{equation}\label{f0nu}
f_0(n)\leq \frac{\delta_0}{3\eta_0}\cdot\frac{\phi(W)}{W}\cdot\log(\kappa_2 WN+W)\leq \frac{\phi(W)}{2\eta_0W}\cdot\log R=\nu(n).
\end{equation}
That is, we always have
$$
0\leq f_0(n)\leq\nu(n)
$$
for every $1\leq n\leq N$.

In view of (\ref{WnbA}),
\begin{align*}
\sum_{\substack{n\in[\kappa_1N,\kappa_2N]\\ Wn+b_0\in\cA}}f_0(n)\geq
\frac{\fd_0(\kappa_2-\kappa_1)}{2\phi(W)}\cdot\frac{(WN)^{\gamma}}{\log N}\cdot \frac{\delta_0}{3\eta_0}\cdot\frac{\phi(W)\log(\kappa_1 WN)}{W}\geq\frac{\fd_0(\kappa_2-\kappa_1)}{13\gamma\kappa_1^{\gamma-1}}\cdot N,
\end{align*}
by recalling that $\eta_0=2\gamma\cdot (\kappa_1WN)^{\gamma-1}$. Hence by Theorem \ref{CFZThm}, if $\nu$ obeys the $m$-linear forms condition, then
$$
\sum_{x,r\in[1,N]}f_0(x)f_0(x+r)\cdots f_0\big(x+(m-1)r\big)\geq c_{m,\fd_0} N^2
$$
for some constant $c_{m,\fd_0}>0$ only depending on $m$ and $\fd_0$. By (\ref{f0nu}), we have $f_0(x)\leq \log R\cdot \phi(W)/(2\eta_0W)$.
According to the discussions after (\ref{xrfcN2}), there exist at least
$$
\frac{c_{m,\fd_0}N^2}2\cdot\bigg(\frac{2\eta_0W}{\phi(W)\log R}\bigg)^m-(\kappa_2-\kappa_1)N
$$ pairs of $(x,r)$ with $1\leq x,r<N$ such that $Wx+b_0,\ldots,W(x+(m-1)r)+b_0$ form a non-trivial arithmetic progression in $\{p\in\cA:\, p\equiv b_0\pmod{W}\}$. Thus Theorem \ref{main} is concluded.

Our remainder task is to verify the $(2^{m-1}m,2m,m)$-linear forms condition
for the measure $\nu$. In the next section, we shall propose a Goldston-Y{\i}ld{\i}r{\i}m-type estimation for $\nu$, which evidently implies the $(2^{m-1}m,2m,m)$-linear forms condition.

\section{The Goldston-Y{\i}ld{\i}r{\i}m-type estimation}
\setcounter{lemma}{0}
\setcounter{theorem}{0}
\setcounter{corollary}{0}
\setcounter{remark}{0}
\setcounter{equation}{0}
\setcounter{conjecture}{0}

Suppose that $1\leq h\leq h_0$ and $1\leq k\leq k_0$. Let $$
\psi_i(x_1,\ldots,x_k)=a_{i1}x_1+\cdots+a_{ik}x_k,\qquad 1\leq i\leq h,
$$
where $a_{ij}\in\Z$ and $|a_{ij}|\leq m$. Further, suppose that
$(a_{i1},\ldots,a_{ik})$ and  $(a_{j1},\ldots,a_{jk})$ are linearly independent for any $1\leq i<j\leq h$.
Below, for convenience, we write $\vec{x}=(x_1,\ldots,x_k)$. Then
we have the following Goldston-Y{\i}ld{\i}r{\i}m-type estimation.
\begin{proposition}\label{linearcondition}
\begin{equation} \label{linearconditione}
\frac1{N^k}\sum_{\vec{x}\in\Z_N^k}\prod_{i=1}^h\nu\big(\psi_i(\vec{x})+b_i\big)=1+o(1),
\end{equation}
for any $b_1,\ldots,b_h\in\Z_N$.
\end{proposition}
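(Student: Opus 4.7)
The plan is to substitute the Fourier expansion \eqref{rhoalphasum} into \eqref{nufunc} and decompose the pseudorandom measure $\nu$ into a Goldston--Y\i ld\i r\i m main piece plus oscillating error pieces. Writing
$$
\tilde\nu(n)=\frac{\phi(W)}{W}\cdot\frac{\Lambda_R(Wn+b_0)^2}{\log R}\cdot\frac{\rho((Wn+b_0)^\gamma)}{2\eta_0},
$$
one has $\nu=\tilde\nu$ on $I:=[\kappa_1 N,\kappa_2 N]$ and $\nu=1$ off $I$. Expanding $\prod_i\nu(\psi_i(\vec x)+b_i)$ via $\nu=\tilde\nu\,\mathbf{1}_I+(1-\mathbf{1}_I)$ yields $2^h$ pieces indexed by $T\subseteq\{1,\ldots,h\}$; by equidistribution of non-trivial linear combinations of $x_1,\ldots,x_k$ modulo $N$ the indicator-function factors decouple from the $\tilde\nu$-factors up to $o(1)$, and the binomial identity $\sum_T(\kappa_2-\kappa_1)^{|T|}(1-\kappa_2+\kappa_1)^{h-|T|}=1$ reduces \eqref{linearconditione} to verifying, for each $T$,
$$
\frac{1}{N^k}\sum_{\vec x\in\Z_N^k}\prod_{i\in T}\tilde\nu(\psi_i(\vec x)+b_i)\,\mathbf{1}_I(\psi_i(\vec x)+b_i)=(\kappa_2-\kappa_1)^{|T|}+o(1).
$$

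Fixing $T$, I would substitute $\rho(t)/(2\eta_0)=1+(2\eta_0)^{-1}\sum_{|j|\geq 1}\alpha_j e(jt)$ in every remaining factor. The diagonal piece, where every factor contributes the constant $1$, is exactly the classical Goldston--Y\i ld\i r\i m linear-forms sum for the Green--Tao measure and evaluates to $(\kappa_2-\kappa_1)^{|T|}+o(1)$. This is established by expanding each $\Lambda_R^2$ via \eqref{LambdaRn}, interchanging the double divisor sum with $\sum_{\vec x}$, computing the local factors at primes $p\nmid W$ via the pairwise linear independence of the $\psi_i$, and invoking the Siegel--Walfisz theorem in the form enabled by the $W$-trick with $W=\prod_{p\leq\log_4 X}p$.

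The non-diagonal contributions are indexed by a non-empty subset $S\subseteq T$ of ``Fourier-excited'' factors together with non-zero integer frequencies $(j_i)_{i\in S}$, and have the shape
$$
\frac{1}{N^k(2\eta_0)^{|S|}}\prod_{i\in S}\alpha_{j_i}\sum_{\vec x}\prod_{i\in T}\frac{\phi(W)\Lambda_R(n_i)^2}{W\log R}\,\mathbf{1}_I(\psi_i(\vec x)+b_i)\cdot e\bigg(\sum_{i\in S}j_in_i^\gamma\bigg),
$$
where $n_i=W\psi_i(\vec x)+Wb_i+b_0$. I would expand every $\Lambda_R(n_i)^2$ as a double divisor sum over $d_i,d_i'\leq R=N^{\delta_0}$ coprime to $W$, interchange with $\sum_{\vec x}$, and decompose $\vec x$ over the sublattice cut out by the divisibility constraints $[d_i,d_i']\mid n_i$. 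After freezing $k-1$ of the coordinates of $\vec x$, the inner one-dimensional sum takes precisely the shape \eqref{expsumsipsii} with phases $\psi_i(n)=(a_in+c_i)^\gamma$; the tail bound \eqref{fourieralphabound} allows truncating each $|j_i|$ to $N^{C\delta_0}$ at negligible cost.

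The main obstacle is a uniform non-trivial bound for \eqref{expsumsipsii} for every non-zero $\vec s=(s_1,\ldots,s_h)$. As the introduction stresses, if the signs of the $s_i$ conspire then $f''(x)$ with $f(n)=\sum_is_i(a_in+c_i)^\gamma$ can be arbitrarily small, so the classical van der Corput estimate at order $2$ does not apply. The strategy, to be executed in Sections~5 and~6, is to prove a Vandermonde-type lemma asserting that for every non-zero $\vec s$ there is some bounded order $r\in[2,c(M)]$ at which $\lambda\leq|f^{(r)}(x)|\leq\alpha\lambda$ uniformly on $[X,X+Y]$, and then to invoke a generalized van der Corput inequality at order $r$. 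Provided $\delta_0$ in Section~3 is small enough, the resulting power saving in $N$ beats both the $O(R^{2h})=O(N^{2h\delta_0})$ loss from the divisor sums and the Fourier truncation, so every error term contributes $o(1)$ and \eqref{linearconditione} follows.
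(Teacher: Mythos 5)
Your overall strategy is the same as the paper's: expand $\rho$ via its Fourier series, identify the constant Fourier mode with the classical Goldston--Y\i ld\i r\i m linear-forms estimate, reduce the oscillatory Fourier modes to one-dimensional exponential sums after expanding $\Lambda_R^2$ as a divisor sum, and handle those sums with a higher-order van der Corput inequality at a derivative order $r$ chosen so that $|f^{(r)}|$ has a uniform two-sided bound. That is exactly the paper's architecture, and your ``Vandermonde-type lemma'' is Lemma~\ref{psisum}.

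However, there are two points, one of which is a real gap. First, the decoupling of the indicator factors $\mathbf 1_{I}$ from the unbounded $\tilde\nu$-factors cannot be justified by ``equidistribution of non-trivial linear combinations modulo $N$'' alone: the $\tilde\nu$'s are truncated divisor sums, so one has to control the joint distribution of a bounded indicator and an $L^\infty$-unbounded weight. The paper does this with the standard Green--Tao device, partitioning $\Z_N^k$ into boxes $B$ of side $Q=[N/\log_4N]$ on which each $\mathbf 1_{I}(\psi_i(\cdot)+b_i)$ is constant; good boxes contribute the main term and the $O(U^{k-1})$ bad boxes are negligible. This is a repairable imprecision.

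The more serious gap is that you reduce to a one-dimensional sum by freezing $x_2,\dots,x_k$ and summing over $x_1$, obtaining phases $(a_i n + c_i)^\gamma$ with $a_i=a_{i1}$ --- but nothing guarantees these $a_{i1}$ are nonzero, let alone that their absolute values are distinct, which is exactly the hypothesis $1\le|\alpha_1|<\dots<|\alpha_h|\le M$ that Lemma~\ref{psisum} requires. Some $\psi_i$ may not involve $x_1$ at all, and distinct forms can have identical $x_1$-coefficients, in which case the leading terms of the $r$-th derivatives can cancel and your proposed lemma fails. The paper fixes this with Lemma~\ref{matrix}: because the forms are pairwise linearly independent, there is a unimodular $T\in\Z^{k\times k}$ turning the coefficient matrix $(a_{ij})$ into $(a_{ij}^*)=(a_{ij})T$ with $1\le|a_{11}^*|<\dots<|a_{h1}^*|\le h^4 m$; one then sums along the image of the first coordinate axis. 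Without this change of variables, the derivative lemma and hence the van der Corput step do not apply, so this is the essential missing ingredient in your proposal.
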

In this section, we need to reduce the proof of Proposition \ref{linearcondition} to the estimations of some exponential sums.
We shall follow the the same way of Green and Tao in \cite{GT08}.
Let $Q=[N/\log_4 N]$ and $U=[N/Q]$.
Let
$$
B_{u_1,\ldots,u_k}=\{(x_1,\ldots,x_k):\,u_iQ<x_i\leq (u_i+1)Q\text{ for each }1\leq i\leq k\}
$$
for each $0\leq u_1,\ldots,u_k\leq U-1$, and let $\cB$ be the set of all those $B_{u_1,\ldots,u_k}$. For any $B\in\cB$, we say $B$ is {\it good} provided that for any $1\leq i\leq h$, either
$\psi_i(B)\subseteq[\kappa_1N,\kappa_2N]$ or $\psi_i(B)\cap[\kappa_1N,\kappa_2N]=\emptyset$. Also, we call $B\in\cB$ {\it bad} if $B$ is not good.

Suppose that $B\in\cB$ is good. Let $\cJ_B=\{1\leq j\leq h:\,\psi_j(B)\subseteq[\kappa_1N,\kappa_2N]\}$. In view of (\ref{nufunc}),
\begin{equation}\label{goodBsum}
\sum_{\vec{x}\in B}\prod_{i=1}^h\nu\big(\psi_i(\vec{x}+b_i)\big)=
\frac{\phi(W)^{|\cJ_B|}}{W^{|\cJ_B|}}\sum_{\vec{x}\in B}\prod_{j\in\cJ_B}\varpi\big((\psi_j(\vec{x})+b_j)W+b_0\big).
\end{equation}
On the other hand, according to Green and Tao's discussions in \cite[Page 528]{GT08}, the number of all bad $B\in\cB$ is $O(U^{k-1})$. Hence
\begin{align*}
&\sum_{B\text{ is }bad}\sum_{\vec{x}\in B}\prod_{i=1}^h\nu\big(\psi_i(\vec{x})+b_i\big)+
\sum_{\vec{x}\in \Z_N^{k}\setminus(\bigcup_{B\in\cB}B)}\prod_{i=1}^h\nu\big(\psi_i(\vec{x}+b_i)\big)\\
=&O\bigg(U^{k-1}\max_{\substack{\cI_1,\ldots,\cI_k\subseteq[1,N]\\ |\cI_i|=Q+O(1)}}\sum_{\vec{x}\in \cI_1\times\cdots\times\cI_k}\prod_{i=1}^h\nu\big(\psi_i(\vec{x})\big)\bigg).
\end{align*}
Note that
$$
\nu(n)\leq 1+\frac{\phi(W)}{W}\cdot\varpi(Wn+b_0)
$$
for any $n\in\Z_N$. Therefore, according to (\ref{goodBsum}), Proposition \ref{linearcondition} immediately follows from the estimation
\begin{equation}\label{phiWkWkQkpsixJ}
\frac{\phi(W)^{|J|}}{W^{|J|}Q^k}\sum_{\vec{x}\in\cI_1\times\cdots\times\cI_k}\prod_{j\in J}\varpi\big(\psi_j(\vec{x})W+b_j^*\big)=1+o(1)
\end{equation}
for any $J\subseteq\{1,\ldots,h\}$ and any $\cI_1,\ldots,\cI_k\subseteq[1,N]$ with $|\cI_i|=Q+O(1)$, where
$$
b_j^*=b_jW+b_0.
$$
Since $h$ is an arbitrary positive integer not greater than $h_0$, it suffices to show that
\begin{equation}\label{phiWkWkQkpsix}
\frac{\phi(W)^{h}}{W^{h}Q^k}\sum_{\vec{x}\in\cI_1\times\cdots\times\cI_k}\prod_{j=1}^h\varpi\big(\psi_j(\vec{x})W+b_j^*\big)=1+o(1).
\end{equation}

Let us turn to the proof of (\ref{phiWkWkQkpsix}). Let
$$
H_1=N^{1-\gamma+\delta_0}.
$$
Recall that in view of (\ref{rhoalphasum}),
$$
\varpi(n)=\frac{\Lambda_R(n)^2}{\log R}\cdot \frac{1}{2\eta_0}\bigg(2\eta_0+\sum_{|s|\geq 1}\alpha_se(sn^\gamma)\bigg).
$$
And by (\ref{fourieralphabound}),
\begin{align*}
\sum_{|s|\geq H_1}|\alpha_s|\ll_{r_0}\sum_{|s|\geq H_1}\frac{1}{\eta_0^{r_0}|s|^{r_0+1}}\ll&\eta_0^{-r_0}H_1^{-r_0+\frac{h+2}{1-\gamma+\delta_0}}\sum_{|s|\geq H_1}|s|^{-\frac{h+2}{1-\gamma+\delta_0}-1}\\
\ll_{r_0}&H_1^{-\frac{h+2}{1-\gamma+\delta_0}}=N^{-h-2}.
\end{align*}
Hence for any intervals $\cI_1,\ldots,\cI_k\subseteq\substack[1,N]$ with $\cI_i=Q+O(1)$, we have
\begin{align*}
&\sum_{\vec{x}\in \vec{\cI}}\prod_{j=1}^h \varpi\big(\psi_{j}(\vec{x})W+b_j^*\big)\\
=&\sum_{\vec{x}\in \vec{\cI}}
\prod_{j=1}^h\frac{\Lambda_R(\psi_{j}(\vec{x})W+b_j^*)^2}{\log R}\bigg(1+\frac{1}{2\eta_0}\sum_{1\leq|s|\leq H_1}\alpha_se\big(s\cdot(\psi_{j}(\vec{x})W+b_j^*)^{\gamma}\big)\bigg)+O(N^{-1}),
\end{align*}
where $\vec{\cI}=\cI_1\times\cdots\times\cI_k$.
Now
\begin{align*}
&\sum_{\vec{x}\in \vec{\cI}}
\prod_{j=1}^h\frac{\Lambda_R(\psi_{j}(\vec{x})W+b_j^*)^2}{\log R}\bigg(1+\frac{1}{2\eta_0}\sum_{1\leq|s|\leq H_1}\alpha_se\big(s\cdot(\psi_{j}(\vec{x})W+b_j^*)^{\gamma}\big)\bigg)\\
=&\sum_{\vec{x}\in \vec{\cI}}\prod_{j=1}^h\frac{\Lambda_R(\psi_{j}(\vec{x})W+b_j^*)^2}{\log R}\sum_{I\subseteq\{1,\ldots,h\}}
\frac{1}{(2\eta_0)^{|I|}}\prod_{i\in I}\bigg(\sum_{1\leq|s|\leq H_1}\alpha_se\big(s\cdot(\psi_{i}(\vec{x})W+b_i^*)^{\gamma}\big)\bigg)\\
=&\sum_{I\subseteq\{1,\ldots,h\}}
\prod_{j=1}^h\frac{\Lambda_R(\psi_{j}(\vec{x})W+b_j^*)^2}{\log R}\sum_{\substack{1\leq|s_i|\leq H_1}}\prod_{l\in I}\frac{\alpha_{s_l}}{2\eta_0}\sum_{\vec{x}\in \vec{\cI}}
e\bigg(\sum_{i\in I}s_i\cdot \big(\psi_{i}(\vec{x})W+b_i^*\big)^{\gamma}\bigg).
\end{align*}

Notice that Green and Tao \cite[Propostion 9.5]{GT08} had proven that
\begin{equation}
\frac{\phi(W)^h}{W^h}\sum_{\vec{x}\in \vec{\cI}}
\prod_{j=1}^h\frac{\Lambda_R(\psi_{j}(\vec{x})W+b_j^*)^2}{\log R}=\big(1+o(1)\big)\prod_{j=1}^h|\cI_j|.
\end{equation}
And we have those $\alpha_{s_l}\ll_{r_0}\eta_0$ by (\ref{fourieralphabound}). Hence it suffices to show that
\begin{equation}
\prod_{j=1}^h\frac{\Lambda_R(\psi_{j}(\vec{x})W+b_j^*)^2}{\log R}\sum_{\vec{x}\in \vec{\cI}}
e\bigg(\sum_{i\in I}s_i\cdot \big(\psi_{i}(\vec{x})W+b_i^*\big)^{\gamma}\bigg)=o\bigg(\frac{Q^{k}}{H_1^{|I|}}\cdot\frac{W^h}{\phi(W)^h}\bigg)
\end{equation}
for any $\emptyset\neq I\subseteq\{1,\ldots,h\}$ and those $s_i$ with $1\leq |s_i|\leq H_1$. Clearly in view of (\ref{LambdaRn}),
\begin{align*}
&\prod_{j=1}^h\frac{\Lambda_R(\psi_{j}(\vec{x})W+b_j^*)^2}{\log R}\sum_{\vec{x}\in \vec{\cI}}
e\bigg(\sum_{i\in I}s_i\cdot \big(\psi_{i}(\vec{x})W+b_i^*\big)^{\gamma}\bigg)\\
=&\frac1{(\log R)^{2h}}\sum_{\substack{d_1,\ldots,d_h,e_1,\ldots,e_h\leq R\\ (d_j,W)=(e_j,W)=1}}\prod_{j=1}^h\mu(d_j)\mu(e_j)\log\frac{R}{d_j}\log\frac{R}{e_j}\\
&\cdot\sum_{\substack{\vec{x}\in\vec{\cI}\\
[d_j,e_j]\mid \psi_j(\vec{x})W+b_j^*\\\text{for each }1\leq j\leq h}}e\bigg(\sum_{i\in I}s_i\cdot\big(\psi_{i}(\vec{x})W+b_i^*\big)^{\gamma}\bigg).
\end{align*}

Fix $1\leq d_1,\ldots,d_h,e_1,\ldots,e_h\leq R$ with $(d_je_j,W)=1$ for $1\leq j\leq h$. Let $D_j=[d_j,e_j]$ for $1\leq j\leq h$ and let $D=[D_1,\ldots,D_h]$.
Let
$$
\vec{\cI}_W=\{(x_1,\ldots,x_k):\; W{\mathfrak a}_i\leq x_i\leq W{\mathfrak b}_i\text{ for }1\leq i\leq k\}
$$
provided that $\vec{\cI}=[{\mathfrak a}_1,{\mathfrak b}_1]\times\cdots[{\mathfrak a}_k,{\mathfrak b}_k]$.
Then since $\psi_i(x_1,\ldots x_k)W=\psi_i(x_1W,\cdots,x_kW)$,
\begin{align*}
&\sum_{\substack{\vec{x}\in\vec{\cI}\\
[d_j,e_j]\mid \psi_j(\vec{x})W+b_j^*\\\text{for any }1\leq j\leq h}}e\bigg(\sum_{i\in I}s_i\cdot\big(\psi_{i}(\vec{x})W+b_i^*\big)^{\gamma}\bigg)\\
=&
\sum_{\substack{\vec{y}=(y_1,\ldots,y_k)\in \vec{\cI}_W\\
\psi_j(\vec{y})+b_j^*\equiv0\pmod{D_j} \\
y_j\equiv0\pmod{W}}}e\bigg(\sum_{i\in I}s_i\cdot\big(\psi_{i}(\vec{y})+b_i^*\big)^{\gamma}\bigg)\\
=&\frac1{W^kD^h}
\sum_{\substack{0\leq u_1,\ldots,u_h<D\\ 0\leq v_1,\ldots,v_k<W}}\sum_{\substack{\vec{y}\in \vec{\cI}_W\\
\vec{y}=(y_1,\ldots,y_k)}}e\bigg(\sum_{i\in I}s_i\big(\psi_{i}(\vec{y})+b_i^*\big)^{\gamma}+\sum_{j=1}^{h}\frac{(\psi_j(\vec{y})+b_j^*)u_j}{D}+\sum_{j=1}^{k}\frac{y_jv_j}{W}\bigg).
\end{align*}
So we only need to show that
\begin{equation}\label{expsum}
\sum_{\substack{\vec{y}\in \vec{\cI}_W\\
\vec{y}=(y_1,\ldots,y_k)}}e\bigg(\sum_{i\in I}s_i\big(\psi_{i}(\vec{y})+b_i^*\big)^{\gamma}+\sum_{j=1}^{h}\frac{(\psi_j(\vec{y})+b_j^*)u_j}{D}+\sum_{j=1}^{k}\frac{y_jv_j}{W}\bigg)=o\bigg(\frac{Q^k}{H_1^{h}R^{2h}}\bigg).
\end{equation}

However, in (\ref{expsum}), it is difficult to give a suitable lower bound for the second derivatives of the sum in $e(\cdot)$, since perhaps some $s_i$ are positive and the other $s_i$ are negative. That is, we can't directly apply the classical van Corput theorem to (\ref{expsum}).
There are two auxiliary lemmas in the next section, which are the key ingredients of our proof.

\section{Two auxiliary lemmas}
\setcounter{lemma}{0}
\setcounter{theorem}{0}
\setcounter{corollary}{0}
\setcounter{equation}{0}
\setcounter{conjecture}{0}
\begin{lemma}\label{matrix}
Let $A=(a_{ij})_{\substack{1\leq i\leq h\\ 1\leq j\leq k}}$ be a $h\times k$ matrix with integral coefficients and let $M=\max_{\substack{1\leq i\leq h\\ 1\leq j\leq k}}|a_{ij}|$. Assume that each two lines of $A$ are linearly independent.
Then there exists a non-singular matrix $T\in\Z^{k\times k}$
such that $|c_{11}|,|c_{21}|,\ldots,|c_{h1}|$ are distinct positive integers bounded by $h^4M$, where
those $c_{ij}$ are given by $$(c_{ij})_{\substack{1\leq i\leq h\\ 1\leq j\leq k}}=(a_{ij})_{\substack{1\leq i\leq h\\ 1\leq j\leq k}}T.$$
\end{lemma}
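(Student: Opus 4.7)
The plan is to reduce the problem to constructing the first column of $T$. Since the first column of $AT$ is exactly $A$ times the first column of $T$, it suffices to find a nonzero $\vec{t}=(t_1,\ldots,t_k)\in\Z^k$ with small entries such that $A\vec{t}$ has entries with distinct nonzero absolute values bounded by $h^4M$. Given such a $\vec{t}$, I can extend it to a nonsingular $T\in\Z^{k\times k}$ as follows: pick any coordinate $j^\ast$ with $t_{j^\ast}\neq0$, put $\vec{t}$ in the first column, and fill the remaining columns with the standard basis vectors $e_j$ for $j\neq j^\ast$ (in any order). Then $\det T=\pm t_{j^\ast}\neq0$.

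To keep the absolute values of $A\vec{t}$ small, I would first pass to a small set of columns. Assume $h\geq2$ (for $h=1$ the lemma is trivial, taking $\vec{t}=e_j$ with $a_{1j}\neq0$). Using the pairwise linear independence hypothesis, for each pair $1\leq i<i'\leq h$ there exist two columns $\{c^1_{i,i'},c^2_{i,i'}\}\subseteq\{1,\ldots,k\}$ at which the $2\times2$ minor of rows $i,i'$ is nonzero. Let $S$ be the union of all these column pairs, so $|S|\leq 2\binom{h}{2}=h(h-1)$. The restriction $A|_S$ still has pairwise linearly independent rows (witnessed pair by pair), and each row remains nonzero since each such $2\times2$ minor has full rank.

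Next I would run a union-bound argument on $A|_S$. Define the linear forms $L'_i(\vec{t}\,')=(A|_S\,\vec{t}\,')_i$ on $\Q^{|S|}$. The $h$ forms $L'_i$, together with the $2\binom{h}{2}=h(h-1)$ forms $L'_i\pm L'_j$ with $i\neq j$, are all nonzero linear forms, so each vanishes on at most $(L+1)^{|S|-1}$ lattice points in $\{0,1,\ldots,L\}^{|S|}$. The total number of ``bad'' $\vec{t}\,'$ is therefore at most $h^2(L+1)^{|S|-1}$, which is strictly smaller than the total count $(L+1)^{|S|}$ as soon as $L+1>h^2$. Choosing $L=h^2$ yields a vector $\vec{t}\,'\in\{0,1,\ldots,h^2\}^{|S|}$ on which the $L'_i$ take pairwise distinct nonzero values, and each satisfies
$$
|L'_i(\vec{t}\,')|\leq M\cdot|S|\cdot L\leq M\cdot h(h-1)\cdot h^2\leq h^4M.
$$
Extending $\vec{t}\,'$ by zeros outside $S$ to a vector $\vec{t}\in\Z^k$ preserves these properties since $A\vec{t}=A|_S\,\vec{t}\,'$, and $\vec{t}\neq0$ because $A\vec{t}\neq0$, so the construction of $T$ goes through.

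The main technical point is the column reduction step. A direct union-bound argument on the full $h\times k$ matrix yields only the bound $|L_i(\vec{t})|\leq Mkh^2$, which exceeds $h^4M$ when $k>h^2$. Reducing to a submatrix with at most $h(h-1)$ columns while preserving pairwise linear independence is the key observation that lets us replace the dependence on $k$ by a dependence on $h$ alone and obtain the clean bound $h^4M$.
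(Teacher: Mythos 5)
Your proof is correct, and it takes a genuinely different route from the paper's. The paper proceeds constructively by a sequence of elementary column operations in two stages: first adding small integer multiples $\theta\in\{1,\ldots,h\}$ of other columns to column $1$ until every first-column entry is nonzero, then adding multiples $\theta\in\{1,\ldots,2h(h-1)\}$ to eliminate coincidences $|b_{s1}|=|b_{t1}|$ one at a time, with each $\theta$ chosen by a pigeonhole argument that simultaneously avoids creating new zeros or new coincidences. Your argument instead reduces to choosing the first column $\vec{t}$ of $T$ directly, restricts to a subset $S$ of at most $h(h-1)$ columns on which pairwise linear independence of rows is still witnessed, and applies a single union bound over the box $\{0,\ldots,h^2\}^{|S|}$ against the $h^2$ nonzero forms $L_i'$ and $L_i'\pm L_j'$; the column-reduction step is what removes the dependence on $k$ and is the key insight, as you note. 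Both arguments deliver the stated bound $h^4M$. The tradeoffs: your counting argument is shorter and makes the provenance of the $h^4$ factor (namely $|S|\cdot L\lesssim h^2\cdot h^2$) transparent, while the paper's operations are explicit and produce $T$ as a product of elementary matrices with $\det T=\pm1$; your $T$ has $\det T=\pm t_{j^\ast}$ with $1\le|t_{j^\ast}|\le h^2$, which is still perfectly adequate for the application in Section~6 since $T$ is used as a change of variables over $\Z_N$ with $N$ a prime far exceeding $h^2$.
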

\begin{proof}
We shall obtain the matrix $(c_{ij})$ via a sequence of column operations on $(a_{ij})$. First, We need to get a new matrix $(b_{ij})_{\substack{1\leq i\leq h\\ 1\leq j\leq k}}$ such that $b_{11},\ldots,b_{h1}$ are all non-zero. Assume that $a_{i_01}=0$. Since $(a_{i_01},\ldots,a_{i_0k})$ is not a zero vector, there exists $2\leq j_0\in k$ such that $a_{i_0j_0}\neq 0$. Note that $|\{1\leq i\leq h:\, a_{i1}\neq 0\}|\leq h-1$ now. By the pigeonhole role, we may choose $\theta\in\{1,2,\ldots,h\}$
such that $$a_{i1}+\theta\cdot a_{ij_0}\neq 0$$ for each $1\leq i\leq h$ with $a_{i1}\neq 0$.
Then the first element of the $i_0$-th line of the new matrix
$$
\left(\begin{matrix}
a_{11}+\theta\cdot a_{1j_0}&a_{12}&\cdots&a_{1k}\\
\vdots&\vdots&\ddots&\vdots\\
a_{h1}+\theta\cdot a_{hj_0}&a_{h2}&\cdots&a_{hk}\\
\end{matrix}\right)
$$
is non-zero. And for each $1\leq i\leq h$ with $a_{i1}\neq 0$, the first element of the $i$-th line of the above new matrix is still non-zero.
Continuing the above process at most $h$ times, we obtain a matrix $(b_{ij})$ with $b_{11},\ldots,b_{h1}\neq 0$ .

Let us turn to the matrix $(b_{ij})$. We shall apply an operation to $(b_{ij})$ such that the number
$$
|\{(s,t):\,1\leq s<t\leq h,\ |b_{s1}|=|b_{t1}|\}|
$$
can be reduced by at least $1$.
Assume that $|b_{s_01}|=|b_{t_01}|\neq 0$ for some distinct $s_0,t_0$.
Since $(b_{s_01},\ldots,b_{s_0k})$ and $(b_{t_01},\ldots,b_{t_0k})$ are also linearly independent, we may choose $2\leq l_0\leq k$ such that
$$
\begin{cases}b_{s_0l_0}\neq b_{t_0l_0},&\text{if }b_{s_01}=b_{t_01},\\
b_{s_0l_0}\neq -b_{t_0l_0},&\text{if }b_{s_01}=-b_{t_01}.\\
\end{cases}
$$
Clearly
$$|b_{s_01}+\theta\cdot b_{s_0l_0}|=|b_{t_01}+\theta\cdot b_{t_0l_0}|$$ implies that
either $$b_{s_01}-b_{t_01}=\theta\cdot(b_{t_0l_0}-b_{s_0l_0}),$$ or
$$b_{s_01}+b_{t_01}=-\theta\cdot(b_{t_0l_0}+b_{s_0l_0}).$$
So there exists at most one $\theta\neq 0$ such that $$|b_{s_01}+\theta\cdot b_{s_0l_0}|=|b_{t_01}+\theta\cdot b_{t_0l_0}|.$$
Hence,
we may choose one $\theta$ such that
$$|b_{s_01}+\theta\cdot b_{s_0l_0}|\cdot|b_{t_01}+\theta\cdot b_{t_0l_0}|>0,\qquad|b_{s_01}+\theta\cdot b_{s_0l_0}|\neq |b_{t_01}+\theta\cdot b_{t_0l_0}|.$$

On the other hand, if $s\neq t$ and $|b_{s1}|\neq|b_{t1}|$, then $$|b_{s1}+\theta\cdot b_{sl_0}|=|b_{t1}+\theta\cdot b_{tl_0}|$$ implies that either
$$b_{s1}-b_{t1}=\theta\cdot(b_{tl_0}-b_{sl_0}),$$ or
$$b_{s1}+b_{t1}=-\theta\cdot(b_{tl_0}+b_{sl_0}).$$
Hence, there exist at most four integers $\theta$ such that either
$$|b_{s1}+\theta\cdot b_{sl_0}|=|b_{t1}+\theta\cdot b_{tl_0}|,$$ or $$b_{s1}+\theta\cdot b_{sl_0}=0,$$ or
$$b_{t1}+\theta\cdot b_{tl_0}=0.$$

Thus by the pigeonhole role, we may choose $\theta\in\{1,2,\ldots,2h(h-1)\}$ such that

\medskip\noindent (i) $|b_{s_01}+\theta\cdot b_{s_0l_0}|$ and $|b_{t_01}+\theta\cdot b_{t_0l_0}|$ are distinct positive integers;

\medskip\noindent (ii) $|b_{s1}+\theta\cdot b_{sl_0}|$ and $|b_{t1}+\theta\cdot b_{tl_0}|$ are distinct positive integers, provided that
$|b_{s1}|$ and $|b_{t1}|$ are  distinct positive integers.

\medskip
Transfer the matrix $(b_{ij})$ to a new matrix
$$
\left(\begin{matrix}
b_{11}+\theta\cdot b_{1l_0}&b_{12}&\cdots&b_{1k}\\
\vdots&\vdots&\ddots&\vdots\\
b_{h1}+\theta\cdot b_{hl_0}&b_{h2}&\cdots&b_{hk}\\
\end{matrix}\right).
$$
Repeating such a process at most $h(h-1)/2$ times, we may obtain the expected new matrix $(c_{ij})$ .
\end{proof}

\begin{lemma}\label{psisum}
Suppose that $M\geq 2$ and let $\psi_i(x)=\alpha_i x+\beta_i$, $1\leq i\leq h$, be some linear functions with $\alpha_i\in\Z$
satisfying that
$$
1\leq |\alpha_{1}|<|\alpha_{2}|<\cdots<|\alpha_{h}|\leq M.
$$
Let $\cI\subseteq\N$ be an interval of integers. Suppose that
$$
N\leq \psi_i(x)\leq \omega N
$$
for any $x\in\cI$ and any $1\leq i\leq h$, where
$$
\omega=1+\frac{1}{2(M-1)}.
$$
Let $\theta_1,\ldots,\theta_h\in\R$ and let
$$
F(x)=\theta_1\psi_1(x)^\gamma+\cdots+\theta_h\psi_h(x)^\gamma.
$$
Then there exists $$1\leq r\leq 3M^2\log M\log h\cdot(1+M\log M)^{h-1}
$$ such that for every $x\in\cI$,
$$
\frac{\omega^{\gamma-r}|\alpha_{h}|^r}{2h^{3(1+M\log M)^h}}\cdot \max_{1\leq i\leq h}\{|\theta_i|\}\leq\frac{|F^{(r)}(x)|}{|(\gamma)_r|\cdot N^{\gamma-r}}\leq \frac{3|\alpha_{h}|^r}{2}\cdot\max_{1\leq i\leq h}\{|\theta_i|\},
$$
where
$$
(\gamma)_r=\gamma(\gamma-1)\cdots(\gamma-r+1).
$$
\end{lemma}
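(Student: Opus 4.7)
The plan is induction on $h$. For the base case $h=1$, I take $r=1$; the computation $F'(x)=\gamma\theta_1\alpha_1\psi_1(x)^{\gamma-1}$ combined with $\psi_1(x)/N\in[1,\omega]$ and $\gamma-1<0$ gives $(\psi_1(x)/N)^{\gamma-1}\in[\omega^{\gamma-1},1]$, which is exactly the pair of claimed inequalities. The key computational engine of every inductive step is the clean identity $\omega(1-1/M)=1-\tfrac{1}{2M}$, an immediate check from $\omega=1+\tfrac{1}{2(M-1)}$. It yields the geometric decay
$$\omega^r\sum_{i<h}\bigl(|\alpha_i|/|\alpha_h|\bigr)^r\le(h-1)\bigl(1-\tfrac{1}{2M}\bigr)^r\le h\,e^{-r/(2M)},$$
which tells us that after rescaling by $\omega^{\gamma-r}$, the contribution to $\sum_i\theta_i\alpha_i^r t_i(x)^{\gamma-r}$ coming from indices $i<h$ is exponentially damped in $r$, with rate governed by $M$.

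For the inductive step I dichotomize on the size of $|\theta_h|$ relative to $\Theta:=\max_i|\theta_i|$, using a threshold of the form $\Theta/h^{K}$ for a $K$ slightly smaller than the $K_h=3(1+M\log M)^h$ appearing in the lemma statement. In the \emph{dominant} case $|\theta_h|\ge\Theta/h^{K}$, I choose $r$ in the upper portion of $[1,R_h]$ with $r\gtrsim 2MK\log h$ (which is comfortably below $R_h=3M^2\log M\log h(1+M\log M)^{h-1}$ by the choice of $K$). The geometric decay above makes $\sum_{i<h}|\theta_i||\alpha_i|^r$ a small fraction of $|\theta_h||\alpha_h|^r\omega^{\gamma-r}$, so the triangle inequality---together with the uniform estimates $t_h(x)^{\gamma-r}\ge\omega^{\gamma-r}$ and $t_i(x)^{\gamma-r}\le 1$ valid for all $x\in\mathcal{I}$---yields both required inequalities simultaneously.

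In the \emph{subdominant} case $|\theta_h|<\Theta/h^{K}$, I invoke the induction hypothesis on the family $\psi_1,\dots,\psi_{h-1}$ to secure some $r\le R_{h-1}\le R_h$ satisfying the $(h-1)$-version of the two-sided bound with $|\alpha_{h-1}|$ and $h-1$ in place of $|\alpha_h|$ and $h$. The correction term $|\theta_h\alpha_h^r t_h(x)^{\gamma-r}|$ is bounded by $\Theta|\alpha_h|^r/h^{K}$, and comparing with the inductive lower bound $\Theta\omega^{\gamma-r}|\alpha_{h-1}|^r/(2(h-1)^{K_{h-1}})$ shows the ratio is at most $2(h-1)^{K_{h-1}}h^{-K}(2\omega)^r$, using $|\alpha_h|/|\alpha_{h-1}|\le 2$. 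Since $r\le R_{h-1}$ and $2\omega\le 3$, this is exponentially small as soon as $(K-K_{h-1})\log h\gtrsim R_{h-1}\log 3$, a condition engineered by the chosen growth of $K_h$. The correction is therefore negligible, and the triangle inequality transfers both inequalities to the full $h$-form sum with parameters $|\alpha_h|,h$.

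The core difficulty is precisely the simultaneous balancing just described: the Case~A constraint $2MK\log h\le R_h$ must coexist with the Case~B constraint $(K-K_{h-1})\log h\gtrsim R_{h-1}\log 3$, and both must be compatible with the recursive definitions $R_h=3M^2\log M\log h(1+M\log M)^{h-1}$ and $K_h=3(1+M\log M)^h$. The common geometric growth rate $1+M\log M$ per inductive step is exactly what makes this possible, and the factor of $3$ in $K_h$ absorbs the arithmetic overhead. Sign ambiguity from $\alpha_i^r$ is benign: one may absorb $\operatorname{sign}(\alpha_i)^r$ into $\theta_i$ without altering $\max_i|\theta_i|$ (or simply restrict to even $r$), and uniformity of the bounds in $x\in\mathcal{I}$ comes automatically since all estimates depend on $x$ only through $t_i(x)\in[1,\omega]$.
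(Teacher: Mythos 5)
Your proposed proof is organized quite differently from the paper's: you run an induction on $h$ and dichotomize on $|\theta_h|$, whereas the paper constructs the sequences $L_j$, $R_j$ recursively, sets $\cJ=\{j<h:|\theta_j|\ge L_j\max_{i>j}|\theta_i|\}$, takes $j_0=\min\cJ$ (or $j_0=h$ when $\cJ=\emptyset$) and uses $r=R_{j_0}$ in a single direct argument. Both arguments do rest on the same engine---the identity $\omega(1-1/M)=1-1/(2M)$ forcing geometric decay of $(\omega|\alpha_i|/|\alpha_h|)^r$---but your organization is genuinely different, and in principle cleaner.

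The problem is that your constant-balancing does not close, and the assertion that $r\gtrsim 2MK\log h$ ``is comfortably below $R_h=3M^2\log M\log h(1+M\log M)^{h-1}$ by the choice of $K$'' is false. With $K$ ``slightly smaller than'' $K_h=3(1+M\log M)^h$, the Case~A requirement forces
$$
r\gtrsim 2MK\log h\approx 6M(1+M\log M)^h\log h,
$$
and comparing this to $R_h=3M^2\log M\,(1+M\log M)^{h-1}\log h$ gives the ratio $2(1+M\log M)/(M\log M)>2$ for \emph{every} $M\ge2$; so the needed $r$ always exceeds $R_h$, not just for small $M$. Shrinking $K$ to fix this collides with your Case~B requirement $K\gtrsim K_{h-1}+R_{h-1}\log 3/\log h$, which in turn cannot be relaxed because the inductive step genuinely loses a factor of order $(\omega|\alpha_h|/|\alpha_{h-1}|)^r\le 3^r$ (with the worst case $|\alpha_{h-1}|=1$, $|\alpha_h|=2$) when transferring the $(h-1)$-bound with $|\alpha_{h-1}|^r$ to a bound with $|\alpha_h|^r$. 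Tracing the two constraints shows the recursion you are implicitly solving forces $R_h$ to grow at a rate closer to $\sqrt{M}$ times $(1+M\log M)$ per step, which the stated bounds do not accommodate. The paper avoids this compounding loss precisely by \emph{not} inducting: by selecting $j_0$ as the smallest index that dominates all larger indices, it picks $r=R_{j_0}$ once and for all and never pays the $\bigl(|\alpha_h|/|\alpha_{h-1}|\bigr)^r$ conversion cost at every level. So your approach, while conceptually attractive, would need the exponents in both $R_h$ and $K_h$ to be enlarged to make the induction close---it does not prove the lemma as stated.
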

\begin{proof}
First, we shall give some $L_0,L_1,\ldots,L_{h-1}>0$ and $R_1,R_2\ldots,R_{h}\in\N$.
Let $L_{0}=1$ and $R_1=1$.
For $1\leq j\leq h$, assume that $L_0,\ldots,L_{j-1}$ and $R_1,\ldots,R_{j}$ have been given.
Since $\alpha_1,\ldots,\alpha_h$ are all non-zero integers lying in $[-M,M]$,
evidently $|\omega \alpha_{i}|<|\alpha_{i+1}|$ for each $1\leq i<h$.
Let \begin{equation}\label{Lidef}
L_{j}=\frac{2h}{\omega^\gamma}\cdot\bigg(\frac{\omega |\alpha_{h}|}{|\alpha_{j}|}\bigg)^{R_{j}}
\end{equation}
and let $R_{j+1}$ be the least positive integer such that
\begin{equation}\label{Ridef}
\bigg(\frac{|\alpha_{j+1}|}{\omega |\alpha_{j}|}\bigg)^{R_{j+1}}\geq\frac{2h}{\omega^\gamma}\cdot L_1\cdots L_{j}.
\end{equation}

Let
$$
\cJ=\big\{1\leq j<h:\,|\theta_j|\geq L_{j}\cdot\max_{i>j}\{|\theta_i|\}\big\}.
$$
First, suppose that $\cJ$ is non-empty. Let
$j_0$ be the least element of $\cJ$
and let $$r=R_{j_0}.$$
For any $x\in\cI$, evidently
\begin{align}\label{fkxj0}
\frac{|F^{(r)}(x)|}{|(\gamma)_r|}=&\bigg|\sum_{j=1}^h\alpha_{j}^r\theta_j\cdot \psi_j(x)^{\gamma-r}\bigg|\notag\\
\geq&|\alpha_{j_0}|^r|\theta_{j_0}|\cdot \omega^{\gamma-r}N^{\gamma-r}-N^{\gamma-r}\sum_{\substack{1\leq i\leq h\\ i\neq j_0}}|\alpha_{i}|^r|\theta_{i}|,
\end{align}
by noting that $\psi_i(x)\in[N,\omega N]$ and $\gamma-r<0$.

We claim that for each $i\neq j_0$,
\begin{equation}\label{cj0thetax}
|\alpha_{j_0}|^r|\theta_{j_0}|\geq 2\omega^{r-\gamma}h\cdot|\alpha_{i}|^r|\theta_{i}|.
\end{equation}
Since $j_0=\min \cJ$, we have
$$
|\theta_i|<L_i\max_{t>i}|\theta_{t}|
$$
for each $1\leq i<j_0$, i.e.,
$$
\max_{t\geq i}|\theta_i|<L_i\max_{t\geq i+1}|\theta_{t}|.
$$
So if $i<j_0$, then
\begin{align}\label{thetai}
|\theta_i|< L_{i}\max_{t\geq i+1}|\theta_{t}|<
L_{i} L_{i+1}|\max_{t\geq i+2}|\theta_{t}|<\cdots
<&L_{i}L_{i+1}\cdots L_{j_0-1}\max_{t\geq j_0}|\theta_{t}|\notag\\
=&
L_{i}L_{i+1}\cdots L_{j_0-1}|\theta_{j_0}|.
\end{align}
In view of (\ref{Ridef}) and $r=R_{j_0}$,
\begin{align*}
\frac{|\alpha_{j_0}|^r|\theta_{j_0}|}{|\alpha_{i}|^r|\theta_{i}|}\geq&
\frac{|\alpha_{j_0}|^r}{|\alpha_{j_0-1}|^r}\cdot\frac{|\theta_{j_0}|}{|\theta_{i}|}
>2\omega^{r-\gamma}h L_1\cdots L_{j_0-1}
\cdot\frac{1}{L_{i}\cdots L_{j_0-1}}\geq 2\omega^{r-\gamma}h,
\end{align*}
by noting that $L_j\geq 1$ for any $j$.

Suppose that $i>j_0$. Then by (\ref{Lidef}),
$$
\frac{|\alpha_{i}|^r}{|\alpha_{j_0}|^r}\leq \frac{|\alpha_{h}|^r}{|\alpha_{j_0}|^r}=\frac{L_{j_0}}{2\omega^{r-\gamma}h}.
$$
So
\begin{align*}
\frac{|\alpha_{j_0}|^r}{|\alpha_{i}|^r}\cdot\frac{|\theta_{j_0}|}{|\theta_{i}|}\geq
\frac{2\omega^{r-\gamma}h}{L_{j_0}}\cdot L_{j_0}=2\omega^{r-\gamma}h.
\end{align*}
Thus (\ref{cj0thetax}) is always valid.

Combining (\ref{fkxj0}) with (\ref{cj0thetax}), we get
$$
\frac{|F^{(r)}(x)|}{|(\gamma)_r|}\geq\frac{\omega^{\gamma-r}|\alpha_{j_0}|^r|\theta_{j_0}|}{2}\cdot N^{\gamma-r}.
$$
On the other hand, by (\ref{thetai}), clearly we have
\begin{align*}
|\theta_{j_0}|\geq\frac{1}{L_1\cdots L_{h-1}}\cdot\max_{1\leq i\leq h}\{|\theta_{i}|\}.
\end{align*}
Hence
$$
\frac{|F^{(r)}(x)|}{|(\gamma)_r|}\leq|\alpha_{j_0}|^r|\theta_{j_0}|\cdot N^{\gamma-r}+
N^{\gamma-r}\sum_{\substack{1\leq i\leq h\\ i\neq j_0}}|\alpha_{i}^r\theta_{i}|\leq \frac{3|\alpha_{j_0}|^r|\theta_{j_0}|}{2}\cdot N^{\gamma-r}.
$$

Next, suppose that $\cJ$ is empty. For any $1\leq i\leq h-1$, in view of (\ref{thetai}), similarly we have
$$
|\theta_i|<L_{i}\max_{t\geq i+1}|\theta_{t}|<L_{i}L_{i+1}\max_{t\geq i+2}|\theta_{t}|<
\cdots
<L_{i}L_{i+1}\cdots L_{h-1}|\theta_{h}|.
$$
Letting $r=R_h$, we get
\begin{align*}
\frac{|\alpha_{h}^r\cdot\theta_{h}|}{|\alpha_{i}^r\cdot\theta_{i}|}\geq
\frac{|\alpha_{h}|^r}{|\alpha_{h-1}|^r}\cdot\frac{|\theta_{h}|}{|\theta_{i}|}\geq
2\omega^{r-\gamma}h L_1\cdots L_{h-1}
\cdot\frac{1}{L_{i}\cdots L_{h-1}}\geq 2\omega^{r-\gamma}h.
\end{align*}
It follows that
\begin{align*}
\frac{|F^{(r)}(x)|}{|(\gamma)_r|}\geq&|\alpha_{h}|^r|\theta_{h}|\cdot \omega^{\gamma-r}N^{\gamma-r}-N^{\gamma-r}\sum_{i=1}^{h-1}|\alpha_{i}^r\theta_{i}|\\
\geq&\frac{|\alpha_{h}|^r|\theta_{h}|}2\cdot \omega^{\gamma-r}N^{\gamma-r}\geq\frac{|\alpha_{h}|^r\omega^{\gamma-r}}{2L_1\cdots L_{h-1}}\cdot\max_{1\leq i\leq h}\{|\theta_{i}|\}\cdot N^{\gamma-r},
\end{align*}
and
$$
\frac{|F^{(r)}(x)|}{|(\gamma)_r|}\leq|\alpha_{h}|^r|\theta_{h}|\cdot N^{\gamma-r}+
N^{\gamma-r}\sum_{i=1}^{h-1}|\alpha_{i}^r\theta_{i}|\leq \frac{3|\alpha_{h}|^r|\theta_{h}|}2\cdot N^{\gamma-r}.
$$

Finally, we need to give a upper bound for $R_h$. Clearly
\begin{align}\label{logLj}
\log L_{j}=&\log(2h\omega^{-\gamma})+R_j\log\frac{\omega|\alpha_h|}{|\alpha_j|}\notag\\
\leq&\log(2h\omega^{-\gamma})+\log\frac{\omega|\alpha_h|}{|\alpha_j|}\cdot\bigg(\log\frac{\omega|\alpha_j|}{|\alpha_{j-1}|}\bigg)^{-1}\cdot\bigg(\log(3h\omega^{-\gamma})+\sum_{i=1}^{j-1}\log L_i\bigg)\notag\\
\leq&\log(2h\omega^{-\gamma})+\log M\cdot M\cdot\bigg(\log(3h\omega^{-\gamma})+\sum_{i=1}^{j-1}\log L_i\bigg).
\end{align}
We claim that
\begin{equation}\label{ublogLj}
\log L_j\leq 3M\log M\log h\cdot(1+M\log M)^j
\end{equation}
for each $0\leq j\leq h$.
In fact, assume that (\ref{ublogLj}) holds for $L_1,\ldots,L_{j-1}$. Then by (\ref{logLj}),
\begin{align*}
\log L_{j}\leq&\log(2h\omega^{-\gamma})+M\log M\bigg(\log(3h\omega^{-\gamma})+3M\log M\log h\cdot\frac{(1+M\log M)^j-1}{M\log M}\bigg)\\
\leq& M\log M\cdot \log h(1+M\log M)^j,
\end{align*}
since it is easy to verify
$$\log(2h\omega^{-\gamma})+M\log M\log(3h\omega^{-\gamma})\leq 3M\log M\log h.
$$
So
$$
R_h\leq \log L_{h-1}\cdot\bigg(\log\frac{\omega|\alpha_h|}{|\alpha_{h-1}|}\bigg)^{-1}\leq 3M^2\log M\log h\cdot(1+M\log M)^{h-1},
$$
and
$$
\log L_1+\cdots+\log L_{h-1}\leq 3\log h\cdot(1+M\log M)^h.
$$
\end{proof}

\section{Proof of Theorem \ref{main}}
\setcounter{lemma}{0}
\setcounter{theorem}{0}
\setcounter{corollary}{0}
\setcounter{equation}{0}
\setcounter{conjecture}{0}

In this section, we shall complete the proof Theorem \ref{main}.
Let $\nu$ be the pseudorandom measure constructed in (\ref{nufunc}). According to Theorem \ref{CFZThm}, we only need to verify that $\nu$ obeys the $(2^{m-1}m,2m,m)$-linear forms condition.

Recall that $h_0=2^{m-1}m$ and $k_0=2m$. Suppose that $1\leq h\leq h_0$ and $1\leq k\leq k_0$. Suppose that $$\psi_i(\vec{x})=a_{i1}x_1+\cdots+a_{ik_0}x_{k},\qquad 1\leq i\leq h$$ with $|a_{ij}|\leq m$, and $b_1,\ldots,b_i\in\Z_N$. As we has mentioned, it suffices to show that
\begin{equation} \label{linearconditioneb}
\frac1{N^k}\sum_{\vec{x}\in\Z_N^k}\prod_{i=1}^h\nu\big(\psi_i(\vec{x})+b_i\big)=1+o(1).
\end{equation}
By Lemma \ref{matrix}, there exists a non-singular matrix $T\in\Z_N^{k\times k}$ such that
$(a_{ij}^*)_{\substack{1\leq i\leq h\\ 1\leq j\leq k}}=(a_{ij})_{\substack{1\leq i\leq h\\ 1\leq j\leq k}}T$ satisfies
$1\leq |a_{11}^*|<\ldots<|a_{h1}^*|\leq h^4m$.
Since $T$ is non-singular,
\begin{align*}
\frac1{N^k}\sum_{\vec{x}\in\Z_N^k}\prod_{i=1}^h\nu\big(\psi_i(\vec{x})\big)=&\frac1{N^k}\sum_{\vec{x}\in\Z_N^k}\prod_{i=1}^h\nu\big(\psi_i(\vec{x}T)\big)\\
=&\frac1{N^k}\sum_{\vec{x}\in\Z_N^k}\prod_{i=1}^h\nu(a_{i1}^*x_1+\cdots+a_{ik_0}^*x_{k}+b_i).
\end{align*}
Therefore without loss of generality, we may assume that those linear functions $\psi_i(\vec{x})=a_{i1}x_1+\cdots+a_{ik_0}x_{k}$ in (\ref{linearconditioneb}) satisfy
$$
1\leq |a_{11}|<|a_{21}|<\ldots<|a_{h1}|\leq h^4m.
$$

According to our discussions in Section 3, (\ref{linearconditioneb}) follows from (\ref{expsum}), i.e.,
\begin{equation}\label{expsumb}
\sum_{\substack{\vec{x}\in \vec{\cI}\\
\vec{x}=(x_1,\ldots,x_k)}}e\bigg(\sum_{i\in I}s_i\big(\psi_{i}(\vec{x})+b_i^*\big)^{\gamma}+\sum_{j=1}^{h}\frac{(\psi_j(\vec{x})+b_j^*)u_j}{D}+\sum_{j=1}^{k}\frac{x_jv_j}{W}\bigg)=o(Q^k\cdot H_1^{-h}R^{-2h}),
\end{equation}
where $$1\leq |s_i|\leq H_1,\qquad 1\leq u_j\leq D,\qquad 1\leq v_j\leq W$$
and $\cI=\cI_1\times\cdots\times \cI_k$ with $$|\cI_1|,\ldots,|\cI_k|=WQ+O(W).$$
Below we need the following general form of van der Corput's theorem \cite[Satz 4]{C29}:
\begin{lemma} Let $f(x)$ be a smooth function on the interval $[X,X+Y]$. Suppose that $r\geq 2$ and
$$
0<\lambda\leq|f^{(r)}(x)|\leq \alpha\lambda
$$
on $[X,X+Y]$. Then
\begin{equation}\label{VC}
\sum_{X\leq n\leq X+Y}e\big(f(n)\big)\ll
\alpha Y\big(\lambda^{\frac1{2^r-2}}+Y^{-\frac1{2^{r-1}}}+(Y^r\lambda)^{-\frac1{2^{r-1}}}\big).
\end{equation}
\end{lemma}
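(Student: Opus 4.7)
The plan is to prove this estimate by induction on $r\geq 2$, using the Weyl--van der Corput $A$-process as the inductive engine. This is essentially the classical argument of van der Corput \cite{C29}, so in the paper it would be enough to cite, but here is the scheme I would follow.

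For the base case $r=2$, I would start from the Weyl--van der Corput inequality
\[
\bigg|\sum_{X\leq n\leq X+Y}e(f(n))\bigg|^2\ll\frac{(Y+H)^2}{H}+\frac{Y+H}{H}\sum_{1\leq h\leq H}\bigg|\sum_n e\big(f(n+h)-f(n)\big)\bigg|
\]
for a parameter $1\leq H\leq Y$. The shifted phase $g_h(x):=f(x+h)-f(x)$ satisfies $h\lambda\leq|g_h'(x)|\leq \alpha h\lambda$ by the mean value theorem applied to $f''$. The Kusmin--Landau estimate (or direct partial summation on the monotone phase) bounds each inner sum by $\ll (h\lambda)^{-1}+\alpha h\lambda Y$; summing over $h\leq H$ and choosing $H$ to balance the three contributions produces exactly the $r=2$ version of the claim.

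For the inductive step, assume the bound is known for $r-1\geq 2$. Given $f$ with $\lambda\leq|f^{(r)}|\leq\alpha\lambda$, I apply the $A$-process once to reduce to inner sums with phase $g_h=f(\cdot+h)-f(\cdot)$ for $1\leq h\leq H$. Since $g_h^{(r-1)}(x)=\int_x^{x+h}f^{(r)}(t)\,dt$, the uniform bound $h\lambda\leq|g_h^{(r-1)}|\leq \alpha h\lambda$ holds with the same constant $\alpha$. The inductive hypothesis, applied with $\lambda$ replaced by $h\lambda$, then bounds each inner sum; inserting this, summing over $h$, and re-optimising $H$ should assemble the three terms $\lambda^{1/(2^r-2)}$, $Y^{-1/2^{r-1}}$, $(Y^r\lambda)^{-1/2^{r-1}}$.

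The main obstacle is the bookkeeping of the exponents through the induction. One step of Cauchy--Schwarz combined with the optimisation of $H$ leads to the recursion $a_r=a_{r-1}/(2(1+a_{r-1}))$; starting from $a_2=1/2$ one must verify that this yields $a_r=1/(2^r-2)$, and similarly that the remaining two exponents halve at each step to reach $1/2^{r-1}$. A secondary but important point is that the constant $\alpha$ must not degrade as the induction proceeds: this is precisely where the identity $g_h^{(r-1)}(x)=\int_x^{x+h}f^{(r)}(t)\,dt$ is crucial, since it transfers both inequalities $\lambda\leq f^{(r)}\leq \alpha\lambda$ into $h\lambda\leq g_h^{(r-1)}\leq \alpha h\lambda$ with no change in $\alpha$. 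Once these routine verifications are in place, the stated bound follows.
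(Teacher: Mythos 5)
The paper does not prove this lemma; it is quoted as Satz~4 from van der Corput's 1929 paper. Your plan of proving it by induction on $r$ via the Weyl--van der Corput $A$-process is indeed the classical route, and your inductive step for $r\geq 3$ is sound: since $f^{(r)}$ is continuous and nonvanishing on $[X,X+Y]$ it has constant sign, so the identity $g_h^{(r-1)}(x)=\int_x^{x+h}f^{(r)}(t)\,dt$ transfers the two-sided bound to $h\lambda\leq|g_h^{(r-1)}|\leq\alpha h\lambda$ with the same $\alpha$, and the $(r-1)$-th derivative test (with $r-1\geq 2$) may be applied to the inner sums without further hypotheses.

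However, your base case $r=2$ has a genuine gap. After one application of Weyl--van der Corput you arrive at the inner sums with phase $g_h=f(\cdot+h)-f(\cdot)$, and you invoke Kusmin--Landau referring to ``the monotone phase.'' But $g_h'$ is generally \emph{not} monotone: $g_h''(x)=f''(x+h)-f''(x)$ can change sign, since the hypothesis only constrains $|f''|\in[\lambda,\alpha\lambda]$ and says nothing about the monotonicity of $f''$. Kusmin--Landau (and the first-derivative test built from it) requires $g_h'$ to be monotone; this is what controls the variation of $1/\big(e(g_h'(n))-1\big)$ in the partial-summation argument. Without monotonicity the bound $\ll (h\lambda)^{-1}+\alpha h\lambda Y$ you state for the inner sum is not justified. (A minor additional slip: the bounds $h\lambda\leq|g_h'|\leq\alpha h\lambda$ come from applying the mean value theorem to $f'$, i.e.\ integrating $f''$, not from applying it ``to $f''$.'')

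The standard fix is to prove the $r=2$ case directly, \emph{without} differencing. Since $|f''|\geq\lambda>0$ and $f''$ is continuous, $f''$ has constant sign, so $f'$ itself is monotone on $[X,X+Y]$. One then partitions $[X,X+Y]$ into $O(1+\alpha\lambda Y)$ subintervals on each of which $f'$ stays within distance $1/2$ of a fixed integer, isolates a middle piece of length $\ll\Delta/\lambda$ on which $\|f'\|<\Delta$ (bounded trivially), applies Kusmin--Landau to the remaining two pieces to obtain $\ll\Delta^{-1}$, and optimizes $\Delta=\lambda^{1/2}$; this yields $\ll(1+\alpha\lambda Y)\lambda^{-1/2}$, which is absorbed by the stated bound at $r=2$. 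With that base case repaired, your induction for $r\geq 3$ goes through, including the exponent recursion you describe.
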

Let
$$
F_{x_2,\ldots,x_k}(y)=\sum_{i\in I}s_i\big(\psi_{i}(y,x_2,\ldots,x_k)+b_i^*\big)^{\gamma}+\sum_{j=1}^{h}\frac{(\psi_j(y,x_2,\ldots,x_k)+b_j^*)u_j}{D}+\sum_{j=1}^{k}\frac{y_jv_j}{W}.
$$
Clearly
$$
\bigg|\sum_{\substack{\vec{x}\in \vec{\cI}\\
\vec{x}=(x_1,\ldots,x_k)}}e\big(F_{x_2,\ldots,x_k}(x_1)\big)\bigg|\leq
\sum_{\substack{x_2\in\cI_2,\ldots,x_k\in\cI_k}}\bigg|\sum_{y\in\cI_1}e\big(F_{x_2,\ldots,x_k}(y)\big)\bigg|.
$$
Let $$M_0=h_0^4m.$$
Applying Lemma \ref{psisum} to $F_{x_2,\ldots,x_k}'$, for any given $x_2\in\cI_2,\ldots,x_k\in\cI_k$, there exists
$$
2\leq r\leq 3M_0^2\log M_0\log h_0\cdot(1+M_0\log M_0)^{h_0-1}+1,
$$
such that for any $y\in\cI_1$
$$
c_1\Psi N^{\gamma-r}\leq |F_{x_2,\ldots,x_k}^{(r)}(y)|\leq c_2\Psi N^{\gamma-r},
$$
where $c_1,c_2>0$ are two constants only depending on $m$ and
$$
\Psi=\max_{i\in I}|s_i|.
$$
Let $\lambda=\Psi N^{\gamma-r}$.  Since $\Psi\leq H_1=N^{1-\gamma+\delta_0}$,
$$
\lambda^{\frac{1}{2^r-2}}\leq (H_1N^{\gamma-r})^{\frac{1}{2^r-2}}=N^{\frac{1+\delta_0-r}{2^r-2}}.
$$
Recalling that  $W\gg\log_3N$ and $Q=[N/\log_4N]$,
Let $Y=WQ$
we have
and
$$
(Y^r\lambda )^{-\frac1{2^{r-1}}}\leq
(N^r\cdot N^{\gamma-r} )^{-\frac1{2^{r-1}}}
= N^{-\frac{\gamma}{2^{r-1}}}.
$$
Using Lemma \ref{VC}, we get that
\begin{align*}
\sum_{y\in\cI_1}e\big(F_{x_2,\ldots,x_k}(y)\big)\ll&Y\big(\lambda^{\frac1{2^r-2}}+Y^{-\frac1{2^{r-1}}}+(Y^r\lambda)^{-\frac1{2^{r-1}}}\big)\\
\ll&WQ\cdot N^{-2^{-2-3M_0^2\log M_0\log h_0\cdot(1+M_0\log M_0)^{h_0-1}}}.
\end{align*}
It is not difficult to check that
$$
3M_0^2\log M_0\log h_0\cdot(1+M_0\log M_0)^{h_0-1}+3\leq2^{4^mm^2}-\frac{\log h_0}{\log 2}
$$
for each $m\geq3$.
Hence recalling that $1-r+3\delta_0=2^{-2^{4^mm^2}}$, we have
\begin{align*}
\sum_{\substack{\vec{x}\in \vec{\cI}\\
\vec{x}=(x_1,\ldots,x_k)}}e\big(F_{x_2,\ldots,x_k}(x_1)\big)\ll&
W^kQ^k\cdot N^{-h_02^{1-2^{4^mm^2}}}\\
\ll& Q^k\cdot N^{-h_0(1-\gamma+3\delta_0)}=Q^k\cdot H_1^{-h_0}R^{-2h_0}.
\end{align*}
Thus (\ref{expsumb}) is concluded, i.e., the function $\nu$ really obeys the $(2^{m-1}m,2m,m)$-linear forms condition.

\end{document}